\newcounter{algorithm}
\newcommand{\bsb}{{\boldsymbol{b}}}
\newcommand{\bst}{{\boldsymbol{t}}}
\newcommand{\bsx}{{\boldsymbol{x}}}
\newcommand{\bsy}{{\boldsymbol{y}}}
\newcommand{\bsz}{{\boldsymbol{z}}}
\newcommand{\bsV}{{\boldsymbol{V}}}
\newcommand{\bsgamma}{{\boldsymbol{\gamma}}}
\newcommand{\bsdelta}{{\boldsymbol{\delta}}}
\newcommand{\bsnu}{{\boldsymbol{\nu}}}
\newcommand{\bsrho}{{\boldsymbol{\rho}}}
\DeclareSymbolFont{bbold}{U}{bbold}{m}{n}
\DeclareSymbolFontAlphabet{\mathbbold}{bbold}
\newcommand{\setu}{{\mathfrak{u}}}
\begin{document}

\mainmatter              
\title{Quasi-Monte Carlo for Bayesian Shape Inversion Governed by the Poisson Problem Subject to Gevrey Regular Domain Deformations}
\titlerunning{Quasi-Monte Carlo for Bayesian Shape Inversion}  
%
\author{Ana Djurdjevac\inst{1,2} \and Vesa Kaarnioja\inst{2,3}\and
Max Orteu\inst{2}  \and Claudia Schillings\inst{2} }
\authorrunning{Ana Djurdjevac et al.} 
%
\tocauthor{Ana Djurdjevac, Vesa Kaarnioja, Max Orteu, and Claudia Schillings}
\institute{Mathematical Institute, University of Oxford, Woodstock Road, Oxford, OX2 6GG, UK\\
{\tt ana.djurdjevac@maths.ox.ac.uk}
\and
Department of Mathematics and Computer Science, Free University of Berlin, Arnimallee 6, 14195 Berlin, Germany,\\
{\tt m.orteu.capdevila@fu-berlin.de, c.schillings@fu-berlin.de}
\and
School of Engineering Sciences, LUT University, P.O.~Box 20, 53851 Lappeenranta, Finland,\\
{\tt vesa.kaarnioja@iki.fi}}

\maketitle              

\begin{abstract}
We consider the application of a quasi-Monte Carlo cubature rule to Bayesian shape inversion subject to the Poisson equation under Gevrey regular parameterizations of domain uncertainty. We analyze the parametric regularity of the associated posterior distribution and design randomly shifted rank-1 lattice rules which can be shown to achieve dimension-independent, faster-than-Monte Carlo cubature convergence rates for high-dimensional integrals over the posterior distribution. In addition, we consider the effect of dimension truncation and finite element discretization errors for this model. Finally, a series of numerical experiments are presented to validate the theoretical results.
\keywords{Bayesian inversion, measurement model, random domain, uncertainty quantification, Gevrey regularity, quasi-Monte Carlo method}
\end{abstract}

\section{Introduction}

Shape recovery problems involve reconstructing the geometry or boundary of an object from indirect or incomplete measurements. They are a significant subset of inverse problems, in particular of those governed by partial differential equations (PDEs), e.g., in groundwater flow~\cite{DjKaOrSc:darcy}, electrical impedance tomography~\cite{DjKaOrSc:somersalo}, or the classical problem of reconstructing the shape of a domain based on the spectrum of a partial differential operator~\cite{DjKaOrSc:kac}. Indeed, in these instances, the geometry required for computational inversion is often not perfectly known. In the Bayesian statistical inversion paradigm~\cite{DjKaOrSc:kaipiosomersalo,DjKaOrSc:stuart}, it is possible to model the uncertain geometry as a random field in addition to other unknown model parameters of interest. Based on partial, indirect, and possibly noisy measurements of the state of a PDE system, 
the Bayesian approach enables making inferences about the uncertain geometry.

We shall investigate a setting where the measurement model can be described by the solution $u(\cdot,\omega)\!:D(\omega)\to \mathbb R$ to the Poisson problem
\begin{align}\label{DjKaOrSc:eq:poissonprob}
\begin{cases}
-\Delta u(\bsx,\omega)=f(\bsx)&\text{for}~\bsx\in D(\omega),\\
u(\bsx,\omega)=0&\text{for}~\bsx\in\partial D(\omega)
\end{cases}\quad\text{for $\mathbb P$-a.e.}~\omega\in \Omega,
\end{align}
where $(\Omega,\mathcal A,\mathbb P)$ is a probability space, $f$ is a fixed source term, and the domain $D(\omega)\subset\mathbb R^d$, $d\in\{1,2,3\}$, is assumed to be {\textit{uncertain}}. Analyses on effective approximation of the response statistics of this model problem have been carried out previously in~\cite{DjKaOrSc:canote16,DjKaOrSc:ChDjEl20,DjKaOrSc:djurdjevac24}. 

In this paper, we shall focus on the Bayesian inverse problem of inferring the domain shape based on measurements of certain observable quantities of interest (QoIs) of the Poisson problem~\eqref{DjKaOrSc:eq:poissonprob}. We shall model the domain uncertainty using the \emph{domain mapping method}~\cite{DjKaOrSc:harbrecht16,DjKaOrSc:xiu06}, where the uncertain domains $D(\omega)$ are modeled as images of a fixed reference domain $D_{\rm ref}\subset\mathbb R^d$ under a mapping $\boldsymbol V(\cdot,\bsy(\omega))\!:\mathbb R^d\to \mathbb R^d$, parameterized by a countable sequence of i.i.d.~random variables $\bsy=\bsy(\omega)$. It is standard practice to treat the random variables $\bsy$ as parameters supported over a sequence space $\varnothing\neq U\subset\mathbb R^{\mathbb N}$ equipped with an appropriate probability measure, a framework that we will also adopt.

The domain mapping $\boldsymbol V$ can be regarded in the Bayesian framework as a pushforward probability measure for the uncertain domain, and in this work, we shall develop a quasi-Monte Carlo (QMC) convergence analysis for a general class of Gevrey regular parameterizations for the domain mapping $\boldsymbol V$ in the inverse setting. The Gevrey class contains smooth, but not necessarily holomorphic, functions with a growth condition on the higher-order partial derivatives, and recently there has been a surge of interest in forward uncertainty quantification for PDEs with parametric inputs belonging to this class~\cite{DjKaOrSc:chernov1,DjKaOrSc:chernov2,DjKaOrSc:gk24gevrey,DjKaOrSc:harbrecht24}. Meanwhile, QMC analysis for Bayesian inference of a Gevrey regular parameterization of an unknown conductivity field has been considered by~\cite{DjKaOrSc:BaKaLa24} within the context of electrical impedance tomography.

A number of shape recovery problems subject to PDEs have been considered in the literature: inverse random acoustic scattering has been studied within the context of multilevel Halton and sparse grid cubatures~\cite{DjKaOrSc:dolz}, the well-posedness of Bayesian shape inversion for time-harmonic Helmholtz transmission and exterior Dirichlet problems has been studied by~\cite{DjKaOrSc:scarabosio24}, and the parametric regularity of the Bayesian posterior for PDEs subject to holomorphic random perturbation fields has been studied by~\cite{DjKaOrSc:gantnerpeters18} within the context of higher-order QMC. Hyv\"onen et al.~\cite{DjKaOrSc:hyvonen17} studied shape recovery for electrical impedance tomography numerically using sparse grids. 

The aim of the present work lies in developing cubature rules with fast rates of convergence for the computation of the posterior mean for Bayesian shape recovery problems in the setting described above. Our main contributions are the following: 
\begin{itemize} 
    \item We consider Bayesian shape inversion subject to Gevrey regular parameterizations of the input random field. Gevrey regular random fields cover a wider range of potential parameterizations for uncertain domains than those previously covered by  holomorphic models of domain uncertainty.
    \item We design dimension-independent randomly shifted rank-1 lattice QMC rules for this model and prove that they exhibit essentially linear cubature convergence rates with respect to the number of cubature points. 
    \item We present a series of high-dimensional numerical experiments which showcase our theoretical convergence results and the quality of the reconstructed features. In particular, we look at the convergence of the root mean square (rms) error and the reconstruction of the expected domain. 
\end{itemize}

This paper is organized as follows. The  notation and function spaces used throughout this work are introduced in Section~\ref{DjKaOrSc:sec:notations}. In Section~\ref{DjKaOrSc:sec: QMC for Bayesian inversion} we present the model problem and associated modeling assumptions, as well as the Bayesian approach to inverse problems and the basic properties of randomly shifted rank-1 lattice rules. Section~\ref{DjKaOrSc:sec:parametricanalysis} contains the main parametric regularity analysis developed for the Bayesian shape inversion problem, followed by our main convergence results. Numerical experiments are presented in Section~\ref{DjKaOrSc:sec:numex} to assess the sharpness of our theoretical results. The paper concludes with a summary of our findings in Section~\ref{DjKaOrSc:sec:conclusions}.

\subsection{Notation}\label{DjKaOrSc:sec:notations}

We will use boldfaced symbols to denote vectors and multi-indices, while the subscript notation $\nu_j$ is used to refer to the $j^{\rm th}$ component of $\boldsymbol \nu$. The set of finitely supported multi-indices is denoted by
$\mathscr F:=\{\boldsymbol\nu\in\mathbb N_0^{\mathbb N}:|\bsnu|<\infty\}$, where the {\em modulus} is defined by $|\bsnu|:=\sum_{j\geq 1}\nu_j$.

Let $\bsnu,\boldsymbol m\in\mathscr F$ and let $\bsx:=(x_j)_{j\geq 1}$ be a sequence of real numbers. We define the shorthand notations (with the convention $0^0:=1$):
\begin{align*}
\partial_{\bsy}^{\bsnu}:=\prod_{j\geq 1}\frac{\partial^{\nu_j}}{\partial y_j^{\nu_j}},\quad \binom{\bsnu}{\boldsymbol m}:=\prod_{j\geq 1}\binom{\nu_j}{m_j},\quad \text{and}\quad \bsx^{\boldsymbol \nu}:=\prod_{j\geq 1}x_j^{\nu_j}.
\end{align*}

For   a nonempty Lipschitz domain $D\subset\mathbb R^d$, $d\in\{1,2,3\}$, we denote by $H_0^1(D)$ the subspace of $H^1(D)$ with zero trace on $\partial D$, equipped with the norm $\|v\|_{H_0^1(D)}:=\|\nabla v\|_{L^2(D)}$. Moreover, we define 
$$
\|v\|_{L^\infty(D)}:={\rm ess\,sup}_{\bsx\in D}\|v(\bsx)\|,
$$
where $\|v\|$ is the absolute value if $v: D \to \mathbb{R}$, the Euclidean norm for vectors if $v: D \to \mathbb{R}^d$, and the spectral norm for matrices if $v: D \to \mathbb{R}^{d\times d}$, and
$$
\|v\|_{W^{1,\infty}(D)}:= \max\big\{{\rm ess\,sup}_{\bsx\in D}\|v(\bsx)\|,{\rm ess\,sup}_{\bsx\in D}\|\nabla v(\bsx)\|\big\},
$$
where $\nabla v$ is the gradient if $v$ is scalar-valued and the Jacobian matrix if $v$ is vector-valued. Finally, we also define the norm
$$
\|v\|_{\mathcal C^k(\overline{D})}:=\max_{|\bsnu|\leq k}\sup_{\bsx\in \overline{D}}|\partial_{\bsx}^{\bsnu}v(\bsx)|\quad\text{for}~v\in \mathcal C^k(\overline{D}).
$$

When working with spaces of parameters and their truncated counterparts, we in general write, e.g., $\bsy \in [-1/2, 1/2]^{\mathbb{N}}$ or $\bsy \in [-1/2, 1/2]^{s}$, for some $s \in \mathbb{N}$. Wherever this might lead to confusion, we use the subindex $\bsy_s$ for the truncated case. 

\section{Quasi-Monte Carlo Methods for Bayesian Shape Inversion Problems}
\label{DjKaOrSc:sec: QMC for Bayesian inversion}

In this section, we present the model problem and its variational formulation, and introduce the associated Bayesian inverse problem. Finally, we outline rank-1 QMC methods for the problem.

\subsection{Model Problem}%

Let $D_{\rm ref}\subset\mathbb R^d$, $d\in\{1,2,3\}$, denote a fixed, nonempty, and bounded Lipschitz domain and let $U:=[-1/2,1/2]^{\mathbb N}$ be a set of parameters, which will later be truncated to some stochastic dimension $s$. We denote by $\boldsymbol V(\cdot,\bsy)\!:\overline{D_{\rm ref}}\to\mathbb R^d$ a domain mapping for $\bsy\in U$. Furthermore, we denote the Jacobian matrix of $\boldsymbol V(\bsx,\bsy)$ with respect to $\bsx\in D_{\rm ref}$ by $J(\cdot,\bsy)\!:D_{\rm ref}\to\mathbb R^{d\times d}$ for $\bsy\in U$. The family of {\em admissible domains} $\{D(\bsy)\}_{\bsy\in U}$ is defined by setting
$$
D(\bsy):=\boldsymbol V(D_{\rm ref},\bsy),\quad \bsy\in U,
$$
and the {\em hold-all domain} $\mathscr D$ is defined as
$$
\mathscr D:=\bigcup_{\bsy\in U}D(\bsy).
$$

\smallskip

\noindent \textbf{Assumptions on transformation $\bsV$ and source term $f$}
\begin{addmargin}[1.3em]{0em}
\begin{enumerate}
\item[(A1)] For each $\bsy\in U$, $\boldsymbol V(\cdot,\bsy)\!:\overline{D_{\rm ref}}\to\mathbb R^d$ is a $\mathcal C^1$-diffeomorphism and there exists a constant $C\geq 1$ independent of $\bsy$ such that%
$$
\|\boldsymbol V(\cdot,\bsy)\|_{\mathcal C^1(\overline{D_{\rm ref}})}\leq C\quad \text{and}\quad \|\boldsymbol V^{-1}(\cdot,\bsy)\|_{\mathcal C^1(\overline{D(\bsy)})}\leq C\quad\text{for all}~\bsy\in U.
$$
\item[(A2)] Gevrey regularity of $\boldsymbol V$: there exist constants $C,\beta\geq 1$ and a sequence of nonnegative numbers $\boldsymbol b=(b_j)_{j\geq 1}$ such that%
$$
\|\partial_\bsy^{\bsnu}\boldsymbol V(\cdot,\bsy)\|_{W^{1,\infty}(D_{\rm ref})}\leq C(|\bsnu|!)^{\beta}\boldsymbol b^{\bsnu}\quad\text{for all}~\bsnu\in\mathscr F~\text{and}~\bsy\in U.
$$
\item[(A3)] Gevrey regularity of $f$: there exist constants $C,\beta\geq 1$ and a vector of nonnegative numbers $\boldsymbol \rho=(\rho_j)_{j=1}^d \in \mathbb{R}_{\geq 0}^d$ such that%
$$
\|\partial_{\bsx}^{\bsnu} f\|_{L^\infty(\mathscr D)}\leq C(|\bsnu|!)^{\beta}\boldsymbol \rho^{\bsnu}\quad\text{for all}~\bsnu\in\mathscr F,
$$
where $f\!:\mathscr D\to\mathbb R$ is the source term on the right-hand side of~\eqref{DjKaOrSc:eq:nonpullback}.
\end{enumerate}
\end{addmargin}

\medskip

{\em Remark.} Without loss of generality, we may assume that the constants $C$ and $\beta$ in assumptions (A1), (A2), and (A3) coincide.

Analogously to~\cite{DjKaOrSc:harbrecht16} and~\cite[Lemma~3.2]{DjKaOrSc:hiptmair18}, assumption {\rm (A1)} implies that there exist constants $0<\sigma_{\min}\leq 1\leq \sigma_{\max}<\infty$ such that%
$$
\sigma_{\min}\leq \min\sigma(J(\bsx,\bsy))\leq \max \sigma(J(\bsx,\bsy))\leq \sigma_{\max}
~\text{ for all}~\bsx\in D_{\rm ref}~\text{and}~\bsy\in U,
$$
where $\sigma(J(\bsx,\bsy))$ denotes the set of all singular values of the matrix $J(\bsx,\bsy)$.

\medskip

The parametric weak formulation of~\eqref{DjKaOrSc:eq:poissonprob} can be stated as: for each $\bsy\in U$, find $u(\cdot,\bsy)\in H_0^1(D(\bsy))$ such that
\begin{align}
\int_{D(\bsy)}\nabla u(\bsx,\bsy)\cdot \nabla v(\bsx)\,{\rm d}\bsy=\int_{D(\bsy)}f(\bsx)v(\bsx)\,{\rm d}\bsx\quad\text{for all}~v\in H_0^1(D(\bsy)).\label{DjKaOrSc:eq:nonpullback}
\end{align}
However, for the analysis, it is convenient to consider the pullback of the solution to~\eqref{DjKaOrSc:eq:nonpullback} on the reference domain $D_{\rm ref}$ (cf. \cite{DjKaOrSc:harbrecht16}). We note that the pullback solution $\widehat u(\cdot,\bsy)\in H_0^1(D_{\rm ref})$ is related to $u(\cdot,\bsy)$ via
$$
\widehat u(\cdot,\bsy)=u(\boldsymbol V(\cdot,\bsy),\bsy)\quad\text{for all}~\bsy\in U,
$$
and in fact, the pullback solution can be identified as the solution to the following parametric weak formulation: for each $\bsy\in U$, find $\widehat u(\cdot,\bsy)\in H_0^1(D_{\rm ref})$ such that
$$
\int_{D_{\rm ref}}(A(\bsx,\bsy)\nabla \widehat u(\bsx,\bsy))\cdot \nabla \widehat v(\bsx)\,{\rm d}\bsx=\int_{D_{\rm ref}}f_{\rm ref}(\bsx,\bsy)\widehat v(\bsx)\,{\rm d}\bsx\quad\text{for all}~\widehat v\in H_0^1(D_{\rm ref}),
$$
where we define
$$
A(\bsx,\bsy):=(J(\bsx,\bsy)^{\rm T}J(\bsx,\bsy))^{-1}\det J(\bsx,\bsy)\quad\text{for all}~\bsx\in D_{\rm ref}~\text{and}~\bsy\in U
$$
and
$$
f_{\rm ref}(\bsx,\bsy):=f(\boldsymbol V(\bsx,\bsy))\det J(\bsx,\bsy)\quad\text{for all}~\bsx\in D_{\rm ref}~\text{and}~\bsy\in U.
$$

We shall also be interested in the dimensionally-truncated PDE solution $\widehat u_s(\cdot,\bsy)$ for $\bsy\in [-1/2,1/2]^s=:U_s$ corresponding to a finite-dimensional perturbation field $\boldsymbol V_s\!:\overline{D_{\rm ref}}\times U_s \to \mathbb R^d$, which we define as
$$
\boldsymbol V_s(\cdot,\bsy)=\boldsymbol V(\cdot,(y_1,\ldots,y_s,0,0,\ldots)).
$$

\subsection{Bayesian Inverse Problem}
\label{DjKaOrSc:subsec: Bayes}
We use the Bayesian statistical inversion paradigm to infer the realization of the uncertain domain based on measurements of the PDE forward model. Specifically, we assume that the unknown parameter $\bsy\in U_s$ has a uniform prior distribution $\mathcal U([-1/2,1/2]^s)$. We denote by $u_s$ the solution to the $s$-dimensional truncated problem  and in the following assume that $\cup_{\bsy \in U_s} D(\bsy) \subset \mathscr D$ also for all $s \in \mathbb{N}$. We note that this is not a restriction since alternatively one can define $\mathscr D^+ := \mathscr D \cup (\cup_{s \in \mathbb{N}} \cup_{\bsy \in U_s} D(\bsy)) $ and argue in $\mathscr D^+$. 

We consider the mathematical measurement model
\begin{equation}\label{DjKaOrSc:delta}
    \boldsymbol\delta=\mathcal G(\bsy)+\boldsymbol\eta,
\end{equation}
where $\boldsymbol\delta\in\mathbb R^k$ are the measurements, $\boldsymbol\eta\sim\mathcal N(0,\Gamma)$ is $k$-dimensional additive Gaussian noise with symmetric and positive definite covariance matrix $\Gamma\in\mathbb R^{k\times k}$, $\boldsymbol\eta$ is assumed to be independent of the process generating the observations, and $\mathcal G\!:U_s\to\mathbb R^k$ is the \emph{parameter-to-observation map}. We will consider the following particular form of observations that depends on fixed reference points and is given by
\begin{equation}\label{DjKaOrSc:observationOperator}
    \mathcal G(\bsy):={\mathcal{O}}(u_s(\cdot,\bsy), \bsy),\quad \bsy\in U_s,
\end{equation}
where
$\mathcal O\!:H_0^1(\mathscr D)\times U_s\to\mathbb R^k$ is the observation operator applied to the solution $u_s$ evaluated at $\boldsymbol V_s(\bsx_0, \bsy), \dots, \boldsymbol V_s(\bsx_{k-1}, \bsy)$, where the $\bsx_i \in D_{\rm ref}$, $i = 0, \dots, k-1$, are given fixed points. Note that this definition requires higher regularity of $u_s$, for example for $d \in \{2,3\}$ we will need that $u_s(\bsy) \in H^2(D(\bsy))$.

Bayes' formula can be used to express the posterior probability density function of the unknown parameter $\bsy$ conditioned on the measurements $\boldsymbol\delta$ as
$$
\pi(\bsy|\boldsymbol\delta)=\frac{\pi(\boldsymbol\delta|\bsy)\pi(\bsy)}{Z_s(\boldsymbol\delta)},\quad\bsy\in U_s,
$$
where $\pi(\bsy):=\mathbf 1_{U_s}(\bsy)$ for $\bsy\in U_s$ is the prior density, 
$$
\pi(\boldsymbol\delta|\bsy):=\exp\bigg(-\frac12 \|\boldsymbol\delta-\mathcal G(\bsy)\|_{\Gamma^{-1}}^2\bigg)
$$
is the likelihood, and
$$
Z_s(\boldsymbol\delta):=\int_{U_s}\pi(\boldsymbol\delta|\bsy)\,{\rm d}\bsy>0
$$
is the normalizing constant, which is always positive by the definition of $\pi(\boldsymbol\delta|\bsy)$. For the reconstruction of the unknown domain, we consider the Bayesian estimator of the expected value of the posterior distribution of $\bsV_s$ given data $\bsdelta$, which we denote by $\mathbb{E}_s^{\bsdelta}[\bsV_s](\bsx)$. It can be expressed as
$$
\mathbb{E}_s^{\bsdelta}[\bsV_s](\bsx):=\mathbb E_s[\boldsymbol V_s(\bsx,\cdot)|\boldsymbol\delta]=\frac{Z_s'(\boldsymbol\delta)}{Z_s(\boldsymbol\delta)},
$$
where $\bsx \in D_{\rm ref}$ and
$$
Z_s'(\boldsymbol\delta):=\int_{U_s} \boldsymbol V_s(\bsx,\bsy)\pi(\boldsymbol\delta|\bsy)\,{\rm d}\bsy.
$$

We also identify $$\mathbb E^{\boldsymbol\delta}[\boldsymbol V](\bsx):=\frac{Z'(\boldsymbol\delta)}{Z(\boldsymbol \delta)},$$ where
$$
Z(\boldsymbol\delta):=\int_U \exp\bigg(-\frac12 \|\boldsymbol\delta-\mathcal G(\bsy)\|_{\Gamma^{-1}}^2\bigg)\,{\rm d}\bsy>0
$$
and
$$
Z'(\boldsymbol\delta):=\int_U \boldsymbol V(\bsx,\bsy)\exp\bigg(-\frac12 \|\boldsymbol\delta-\mathcal G(\bsy)\|_{\Gamma^{-1}}^2\bigg)\,{\rm d}\bsy.
$$
The infinite-dimensional integrals are defined as $$\int_U F(\bsy)\,{\rm d}\bsy:=\lim_{s\to\infty}\int_{U_s}F(y_1,\ldots,y_s,0,0,\ldots)\,{\rm d}y_1\cdots\,{\rm d}y_s.$$

In this setting, both integrals  $Z'(\boldsymbol \delta)$ and $Z(\boldsymbol\delta)$ are well-defined since the integrands are continuous, so we can use the dominated convergence theorem together with the bound provided by assumption (A1) to ensure that the expectations exist. For more details, see~\cite{DjKaOrSc:kuoinfinite17}.

Both $Z_s'$ and $Z_s$ are high-dimensional integrals, which we want to approximate using a rank-1 QMC cubature rule. In order to find the appropriate weights that guarantee a dimension-independent error in this case, we will need to study the regularity of our QoI, i.e., the expected domain $\mathbb{E}_s^{\bsdelta}[\bsV_s](D_{\rm ref})$, with respect to the stochastic parameters. This is addressed later in Section \ref{DjKaOrSc:subsec:QMC error}.

\subsection{Quasi-Monte Carlo Methods}%
Let $F\!:U_s\to\mathbb R$ be a continuous function. In what follows, we will consider numerical approximations of $s$-dimensional integrals
$$
I(F):=\int_{U_s}F(\bsy)\,{\rm d}\bsy.
$$

The randomly shifted rank-1 QMC estimator of $I(F)$ is defined by (cf. \cite{DjKaOrSc:actanumer})
$$
Q_{n, s}(F):=\frac{1}{nR}\sum_{r=0}^{R-1} \sum_{\ell=0}^{n-1} F(\{\bst_\ell+\boldsymbol\Delta^{(r)}\}-\tfrac{\mathbf 1}{\mathbf 2}),
$$
where $\boldsymbol\Delta^{(0)},\ldots,\boldsymbol\Delta^{(R-1)}$ are i.i.d.~random shifts drawn from $\mathcal U([0,1]^s)$ and the cubature nodes are defined by
$$
\bst_\ell:=\bigg\{\frac{\ell\boldsymbol z}{n}\bigg\},\quad \ell\in\{0,\ldots,n-1\},
$$
where $\{\cdot\}$ denotes the component-wise fractional part and $\boldsymbol z\in\{1,\ldots,n-1\}^s$ is the {\em generating vector}.

In order to obtain the cubature error, we will assume that 
the integrand $F$ belongs to a weighted Sobolev space of dominating first order mixed smoothness $\mathcal W_{s,\boldsymbol\gamma}$ (cf. \cite{DjKaOrSc:kuonuyenssurvey}) endowed with the norm
$$
\|F\|_{s,\bsgamma}:=\bigg(\sum_{\setu\subseteq\{1,\ldots,s\}}\frac{1}{\gamma_{\setu}}\int_{U_{|\setu|}}\bigg(\int_{U_{s-|\setu|}}\frac{\partial^{|\setu|}}{\partial\bsy_{\setu}}F(\bsy)\,{\rm d}\bsy_{-\setu}\bigg)^2\,{\rm d}\bsy_{\setu}\bigg)^{1/2},
$$
where $\boldsymbol\gamma=(\gamma_{\setu})_{\setu\subseteq\{1,\ldots,s\}}$ is a sequence of positive weights, ${\rm d}\bsy_{\setu}:=\prod_{j\in\setu}{\rm d}y_j$ and ${\rm d}\bsy_{-\setu}:=\prod_{j\in\{1,\ldots,s\}\setminus\setu}{\rm d}y_j$.

The following result states that it is possible to construct generating vectors using a \emph{component-by-component (CBC)} algorithm~\cite{DjKaOrSc:cbc1,DjKaOrSc:actanumer,DjKaOrSc:cbc2} satisfying rigorous error bounds.
\begin{theorem}[{cf.~\cite[Theorem~5.1]{DjKaOrSc:kuonuyenssurvey}}]\label{DjKaOrSc:cbcbound}
Let $F \in \mathcal W_{s,\bsgamma}$ with weights $\bsgamma=(\gamma_{\setu})_{\setu\subseteq\{1,\ldots,s\}}$. For a prime number $n$, a randomly shifted lattice rule with $n$ points in $s$ dimensions can be constructed by a CBC algorithm such that for $R$ independent random shifts and for all $\lambda\in(1/2,1]$, there holds
$$
\sqrt{\mathbb E_{\boldsymbol\Delta}|I(F)-Q_{n, s}(F)|^2}\leq \frac{1}{\sqrt R}\bigg(\frac{1}{n-1}\sum_{\varnothing\neq\setu\subseteq\{1,\ldots,s\}}\gamma_{\setu}^{\lambda}\bigg(\frac{2\zeta(2\lambda)}{(2\pi^2)^\lambda}\bigg)^{|\setu|}\bigg)^{1/(2\lambda)}\|F\|_{s,\bsgamma},
$$
where the left-hand side is the rms error, $\mathbb E_{\boldsymbol\Delta}$ denotes the expected value with respect to the uniformly distributed random shifts over $[0,1]^s$ and $\zeta(x):=\sum_{k=1}^\infty k^{-x}$ is the Riemann zeta function for $x>1$.
\end{theorem}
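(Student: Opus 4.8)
The plan is to follow the classical reproducing-kernel / worst-case error analysis of randomly shifted rank-1 lattice rules in the weighted Sobolev space $\mathcal W_{s,\bsgamma}$, as developed in the references cited around the statement. The first step is to record that $\mathcal W_{s,\bsgamma}$ is a reproducing kernel Hilbert space, so that for any fixed shift the integration error $I(F)-\frac1n\sum_{\ell}F(\{\bst_\ell+\bsDelta\}-\tfrac{\mathbf 1}{\mathbf 2})$ is a bounded linear functional whose operator norm is the worst-case error. Averaging its square over $\bsDelta\sim\mathcal U([0,1]^s)$ yields the \emph{shift-averaged worst-case error} $e_{n,s}^2(\boldsymbol z)$, and for the $R$-shift estimator the errors across the $R$ i.i.d.\ shifts are independent while each single-shift estimator is unbiased, so the mean-square error is exactly $1/R$ times the single-shift mean-square error. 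This produces the factor $1/\sqrt R$ after taking square roots and reduces everything to the single-shift bound $\mathbb E_{\bsDelta}|I(F)-Q_{n,s}^{(1)}(F)|^2\le e_{n,s}^2(\boldsymbol z)\|F\|_{s,\bsgamma}^2$.

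The second step is to make $e_{n,s}^2(\boldsymbol z)$ explicit. Using the reproducing kernel of $\mathcal W_{s,\bsgamma}$ and averaging over the shift periodizes the one-dimensional kernel, giving
$$e_{n,s}^2(\boldsymbol z)=\sum_{\varnothing\neq\setu\subseteq\{1,\ldots,s\}}\gamma_{\setu}\,\frac{1}{n}\sum_{\ell=0}^{n-1}\prod_{j\in\setu}B_2(\{\ell z_j/n\}),$$
where $B_2(x)=x^2-x+\tfrac16$ is the second Bernoulli polynomial. This is the quantity the CBC algorithm greedily minimizes, one coordinate of $\boldsymbol z$ at a time.

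The heart of the proof — and the step I expect to be the main obstacle — is the CBC induction on the dimension. The idea is to carry a bound on $[e_{n,d}^2(\boldsymbol z)]^\lambda$ for $\lambda\in(1/2,1]$ through the coordinates $d=1,\ldots,s$. At each step the greedy choice of $z_d$ makes $e_{n,d}^2$ no larger than its average over the $n-1$ admissible values $z_d\in\{1,\ldots,n-1\}$; here primality of $n$ is used, so that every such $z_d$ is a unit modulo $n$ and $\varphi(n)=n-1$. Splitting the sum over $\setu$ into subsets that do and do not contain $d$ and using the subadditivity $(a+b)^\lambda\le a^\lambda+b^\lambda$ gives a recursion. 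The delicate part is the evaluation of the arising one-dimensional exponential sum: inserting the Fourier expansion $B_2(\{x\})=\frac{1}{2\pi^2}\sum_{h\neq0}h^{-2}e^{2\pi\mathrm i hx}$ and averaging over $z_d$, primality of $n$ forces the inner lattice sum to collapse onto the multiples of $n$, which after raising to the power $\lambda$ yields exactly the per-coordinate factor $2\zeta(2\lambda)/(2\pi^2)^\lambda$ together with the weight $\gamma_{\setu}^\lambda$. Solving the recursion gives
$$e_{n,s}^2(\boldsymbol z)\le\bigg(\frac{1}{n-1}\sum_{\varnothing\neq\setu\subseteq\{1,\ldots,s\}}\gamma_{\setu}^\lambda\Big(\frac{2\zeta(2\lambda)}{(2\pi^2)^\lambda}\Big)^{|\setu|}\bigg)^{1/\lambda}.$$

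Combining the three steps — the $1/\sqrt R$ variance reduction, the single-shift inequality $\mathbb E_{\bsDelta}|I(F)-Q_{n,s}^{(1)}(F)|^2\le e_{n,s}^2(\boldsymbol z)\|F\|_{s,\bsgamma}^2$, and the CBC bound on $e_{n,s}^2$ taken to the power $1/(2\lambda)$ — produces the claimed estimate. I expect the routine parts to be the RKHS bookkeeping and the variance reduction from independent shifts; the genuine work lies in the CBC/Jensen induction and the exponential-sum evaluation that yields the constant $2\zeta(2\lambda)/(2\pi^2)^\lambda$, which I would reproduce from the cited survey rather than reinvent.
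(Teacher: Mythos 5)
Your proposal is correct and follows exactly the standard argument behind this result: the paper itself gives no proof, importing the statement verbatim from the cited survey (Kuo--Nuyens, Theorem~5.1), and your reconstruction — unbiasedness plus independence of the $R$ shifts giving the $1/\sqrt{R}$ factor, the shift-averaged worst-case error expressed via $B_2$, and the CBC/Jensen induction with the prime-$n$ character-sum evaluation producing $2\zeta(2\lambda)/(2\pi^2)^\lambda$ per coordinate — is precisely that proof. No gaps to report.
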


In what follows, we will proceed to analyze the parametric regularity of our high-dimensional integral QoI. The parametric regularity bounds can be used to bound the weighted Sobolev norm $\|\cdot\|_{s,\bsgamma}$ appearing in the CBC error bound given in Theorem~\ref{DjKaOrSc:cbcbound} and the weights $\boldsymbol\gamma=(\gamma_{\setu})_{\setu\subseteq\{1,\ldots,s\}}$ can be chosen to control the error bound independently of the dimension $s.$ Finally, the weights $\boldsymbol\gamma$ can be used as inputs to the CBC algorithm to construct tailored lattice rules, which admit dimension-independent QMC convergence rates.

\section{Regularity and Error Analysis}\label{DjKaOrSc:sec:parametricanalysis}

In this section we analyze the total  error incurred in the numerical approximation of the 
expected domain given as the inverse problem through the Poisson equation. For the spatial discretization we will use the finite element method (FEM) and denote by $u_{s,h}$ the corresponding spatial approximation of the $s$-dimensional truncated problem. Given data $\bsdelta$, we write $\mathbb{E}_{s}^{\bsdelta}[\bsV_s](\bsx)$ for the posterior expectation of our QoI in the dimensionally-truncated problem, $\mathbb{E}_{s,h}^{\bsdelta}[\bsV_s](\bsx)$ for the posterior expectation of the dimensionally-truncated and discretized problem, and $\mathbb{E}_{s,h,n}^{\bsdelta}[\bsV_s](\bsx)$ 
for the QMC approximation of our QoI for the dimensionally-truncated and discretized problem. Similarly, we write $Z_s'(\bsdelta)$, $Z_{s,h}'(\bsdelta)$, and $Z_{s,h,n}'(\bsdelta)$. The total error $ \| \mathbb{E}^{\bsdelta}[\bsV](\bsx) - \mathbb{E}_{s,h,n}^{\bsdelta}[\bsV_s](\bsx) \|_{L^2(D_{\rm ref})}$ can then be decomposed into first terms
\begin{align*}
\| \mathbb{E}^{\bsdelta}[\bsV] - \mathbb{E}_{s,h,n}^{\bsdelta}[\bsV_s] \|_{L^2(D_{\rm ref})} &\leq
{\| \mathbb{E}^{\bsdelta}[\bsV] - \mathbb{E}_{s}^{\bsdelta}[\bsV_s]\|_{L^2(D_{\rm ref})}}
\\
&\quad +{\| \mathbb{E}_{s}^{\bsdelta}[\bsV_s] -\mathbb{E}_{s,h}^{\bsdelta}[\bsV_s]\|_{L^2(D_{\rm ref})}}
\\
&\quad +{\| \mathbb{E}_{s,h}^{\bsdelta}[\bsV_s] -\mathbb{E}_{s,h,n}^{\bsdelta}[\bsV_s] \|_{L^2(D_{\rm ref})}},
\end{align*}
where the first term is the dimension truncation error, the second one is the FEM error, and the last one is the QMC cubature error. We will bound each of these terms separately.

\subsection{Regularity of the Forward Problem}

Our starting point is the following result from \cite{DjKaOrSc:djurdjevac24}, which guarantees the Gevrey regularity with respect to the parameter $\bsy$ of the pullback solution in the reference domain and that we will later need for the truncation and QMC error analyses.

\begin{theorem}[cf.~{\cite{DjKaOrSc:djurdjevac24}}]  Let assumptions {\rm (A1)--(A3)} hold. Then, for all $\bsnu\in\mathscr F$ and $\bsy\in U$, there holds\label{DjKaOrSc:thm: forward regularity}
$$
\| \partial_{\boldsymbol{y}}^{\boldsymbol{\nu}} \widehat{u}(\cdot,\boldsymbol{y}) \|_{H_0^1(D_{\rm{ref}})} \leq c_1 \mathop{c_2^{|\boldsymbol{\nu}|}}(\mathop{|\boldsymbol{\nu}|!})^\beta\boldsymbol{b}^{\boldsymbol{\nu}}
$$
with
\begin{equation*}
    c_1 = 1 + \left(\frac{\sigma_{\max}}{\sigma_{\min}}\right)^d \frac{\sigma_{\max}^2 C_{D_{\rm ref}}|D_{\rm ref}|^{1/2}C}{2^\beta},
\end{equation*}
\begin{equation*}
    c_2 = \max \{\tilde{c}_1, \tilde{c}_2, \tilde{c}_3, \tilde{c}_4 \}^2 ((d^2)!)^\beta 2^{\beta(d^2+1)+1},
\end{equation*}
where 
$$
\tilde{c}_1 = \left(\frac{\sigma_{\max}}{\sigma_{\min}}\right)^d \frac{1}{\sigma_{\min}^2 ((d^2)!)^\beta}, \quad \tilde{c}_2 = 2\left(\frac{2^{\beta}C}{\sigma_{\min}}\right)^3,
$$
$$
\tilde{c}_3 = \frac{c_1 - 1}{\sigma_{\max}^{2} ((d^2)!)^\beta}, \quad \tilde{c}_4 =  \frac{(2^{\beta}C)^2}{\sigma_{\min}}\max\{1, \| \bsrho \|_{\ell^{1/\beta}}\},
$$
and $C_{D_{\rm{ref}}}$ is the Poincar\'e constant of $D_{\rm{ref}}$ while $|D_{\rm ref}|=\int_{D_{\rm ref}}{\rm d}\bsx$.
\end{theorem}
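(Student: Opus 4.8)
The plan is to bound the parametric derivatives $\partial^{\bsnu}\widehat u$ by induction on $|\bsnu|$, exploiting the fact that the pullback weak form has a $\bsy$-uniformly coercive bilinear form. First I would differentiate the pullback variational identity $\bsnu$ times in $\bsy$. Since only the coefficient $A(\cdot,\bsy)$ and the load $f_{\rm ref}(\cdot,\bsy)$ carry the parameter dependence, the Leibniz rule yields, for every test function $\widehat v\in H_0^1(D_{\rm ref})$ and all $\bszero\neq\bsm\leq\bsnu$ (componentwise),
$$
\int_{D_{\rm ref}}(A\,\nabla\partial^{\bsnu}\widehat u)\cdot\nabla\widehat v\,{\rm d}\bsx
=\int_{D_{\rm ref}}\partial^{\bsnu}f_{\rm ref}\,\widehat v\,{\rm d}\bsx
-\sum_{\bszero\neq\bsm\leq\bsnu}\binom{\bsnu}{\bsm}\int_{D_{\rm ref}}(\partial^{\bsm}A\,\nabla\partial^{\bsnu-\bsm}\widehat u)\cdot\nabla\widehat v\,{\rm d}\bsx.
$$
Testing with $\widehat v=\partial^{\bsnu}\widehat u$, invoking the uniform lower eigenvalue bound $\sigma_{\min}^{d}/\sigma_{\max}^{2}$ of $A$ (which follows from the singular value bounds supplied by (A1)), and using Poincar\'e's inequality on the load term converts this into the recursive a priori estimate
$$
\frac{\sigma_{\min}^{d}}{\sigma_{\max}^{2}}\,\|\partial^{\bsnu}\widehat u\|_{H_0^1(D_{\rm ref})}
\leq C_{D_{\rm ref}}\|\partial^{\bsnu}f_{\rm ref}\|_{L^2(D_{\rm ref})}
+\sum_{\bszero\neq\bsm\leq\bsnu}\binom{\bsnu}{\bsm}\|\partial^{\bsm}A\|_{L^\infty(D_{\rm ref})}\,\|\partial^{\bsnu-\bsm}\widehat u\|_{H_0^1(D_{\rm ref})},
$$
which expresses the top-order derivative through strictly lower-order ones.

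The second block of work is to supply Gevrey-type bounds for the coefficient and load derivatives. For $A=(J^{\rm T}J)^{-1}\det J$ I would estimate each factor separately: the entries of $J$ are controlled by (A2); the determinant, being a degree-$d$ polynomial in the $d^2$ entries of $J$, is handled by repeated use of the Leibniz rule (this is the source of the $((d^2)!)^{\beta}$ and $2^{\beta(d^2+1)+1}$ factors in $c_2$); and the inverse $B^{-1}:=(J^{\rm T}J)^{-1}$ is differentiated through the identity $\partial^{\bsnu}B^{-1}=-B^{-1}\sum_{\bszero\neq\bsm\leq\bsnu}\binom{\bsnu}{\bsm}(\partial^{\bsm}B)\,\partial^{\bsnu-\bsm}B^{-1}$ together with $\|B^{-1}\|\leq\sigma_{\min}^{-2}$, which yields a Gevrey bound by its own auxiliary induction. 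For $f_{\rm ref}=(f\circ\boldsymbol V)\det J$ the new ingredient is the composition $f\circ\boldsymbol V$, which I would expand by a multivariate Fa\`a di Bruno chain rule, inserting (A3) for the derivatives of $f$ and (A2) for those of $\boldsymbol V$; this is where the factor $\max\{1,\|\bsrho\|_{\ell^{1/\beta}}\}$ (hence $\tilde c_4$) enters. Each of these operations preserves the Gevrey form $C(|\bsnu|!)^{\beta}\boldsymbol b^{\bsnu}$ with adjusted constants, precisely because products and compositions of Gevrey-$\beta$ functions remain Gevrey-$\beta$.

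With these ingredients in hand, the induction closes using the ansatz $\|\partial^{\bsnu}\widehat u\|_{H_0^1(D_{\rm ref})}\leq c_1 c_2^{|\bsnu|}(|\bsnu|!)^{\beta}\boldsymbol b^{\bsnu}$. The base case $\bsnu=\bszero$ is the Lax--Milgram a priori bound, which fixes $c_1$ (and accounts for the $C_{D_{\rm ref}}|D_{\rm ref}|^{1/2}C$ structure appearing there). In the inductive step I substitute the ansatz and the coefficient bounds into the recursion; the products $\boldsymbol b^{\bsm}\boldsymbol b^{\bsnu-\bsm}=\boldsymbol b^{\bsnu}$ recombine exactly, and the factorial weights are tamed by the combinatorial inequality $\binom{\bsnu}{\bsm}(|\bsm|!)^{\beta}(|\bsnu-\bsm|!)^{\beta}\leq(|\bsnu|!)^{\beta}$, a consequence of $\binom{\bsnu}{\bsm}\leq\binom{|\bsnu|}{|\bsm|}$ and $\beta\geq 1$. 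Choosing $c_2$ as the displayed maximum makes every summand absorbable into $c_1 c_2^{|\bsnu|}(|\bsnu|!)^{\beta}\boldsymbol b^{\bsnu}$ and closes the induction.

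The hard part will not be the linear solve but the two nonlinear operations on the coefficients: controlling the matrix inverse $(J^{\rm T}J)^{-1}$ and the composition $f\circ\boldsymbol V$ while keeping the exact Gevrey exponent $\beta$ and tracking explicit constants. The inverse requires its own nested induction and careful use of the uniform singular value lower bound, whereas the composition requires a Gevrey-stable form of Fa\`a di Bruno with sharp control of the combinatorial multiplicities; it is the bookkeeping in these two steps that produces the somewhat involved $c_2$, and arranging those multiplicities to fit under a single geometric factor $c_2^{|\bsnu|}$ is the delicate point.
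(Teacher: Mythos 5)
Your sketch cannot be compared against an in-paper argument because the paper proves nothing here: the theorem is imported wholesale from the cited reference \cite{DjKaOrSc:djurdjevac24}, and what you propose is essentially the argument used there (and standard in this literature) --- differentiate the pulled-back weak form, test with $\partial_{\bsy}^{\bsnu}\widehat u$, use the uniform coercivity bound $\sigma_{\min}^{d}/\sigma_{\max}^{2}$ of $A$, feed in Gevrey bounds for $\partial_{\bsy}^{\bsm}A$ and $\partial_{\bsy}^{\bsnu}f_{\rm ref}$ obtained by Leibniz, the matrix-inverse recursion, and a Fa\`a di Bruno-type composition estimate (the latter indeed producing $\|\bsrho\|_{\ell^{1/\beta}}$), and close the induction with $\binom{\bsnu}{\bsm}(|\bsm|!)^{\beta}(|\bsnu-\bsm|!)^{\beta}\leq(|\bsnu|!)^{\beta}$. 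The proposal is correct, and the one step you flag as delicate --- fitting the Leibniz sum under a single factor $c_2^{|\bsnu|}$ --- closes by the standard device of grouping the sum by $|\bsm|=\ell$ via Vandermonde's identity $\sum_{|\bsm|=\ell,\,\bsm\leq\bsnu}\binom{\bsnu}{\bsm}=\binom{|\bsnu|}{\ell}$, observing that every term with $\ell\geq 1$ carries at most $c_2^{|\bsnu|-1}$ (one ``saved'' factor of $c_2$), and then choosing $c_2$ to dominate twice each coefficient constant so the resulting geometric tail is absorbed --- which is precisely why $c_2$ is defined as a maximum of several constants.
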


In applications, one does not typically observe the solution in the reference domain, but rather in a realization $D(\bsy)$. This motivates the study of observation operators considered in the sampled domain or depending on the solution $u$ in $D(\bsy)$. 

However, the pushforward solution $u$ does not directly inherit the regularity with respect to $\bsy$ from the pulled-back solution $\widehat{u}$, due to the spatial derivative of $\widehat{u}$ that appears from the chain rule. With our choice of the observation operator~\eqref{DjKaOrSc:observationOperator}, there holds
$$
\mathcal O(u(\cdot,\bsy),\bsy)=(u(\boldsymbol V(\bsx_0,\bsy),\bsy),\ldots,u(\boldsymbol V(\bsx_{k-1},\bsy),\bsy)),
$$
and hence we can exploit the identity $u(\boldsymbol V(\bsx,\bsy),\bsy)=\widehat u(\bsx,\bsy)$ to obtain 
$$
\partial_{\bsy}^{\bsnu}u(\boldsymbol V(\bsx,\bsy),\bsy)=\partial_{\bsy}^{\bsnu}\widehat u(\bsx,\bsy),
$$
which allows us to study the parametric regularity of our QoI in the sampled domain in terms of the known parametric regularity of the pulled-back solution.

\subsection{Error Analysis}

As already mentioned,  it is often necessary to truncate the perturbation field $\bsV$ to some stochastic dimension $s$. Hence, instead of the solution $u$ we consider the solution $u_s$ to the dimensionally-truncated problem. Further, in general the solution to the dimensionally-truncated problem cannot be calculated analytically and it is instead necessary to approximate it with a discretized solution. Finally, the application of the QMC rule introduces a numerical cubature error. In this section, we study these errors separately and obtain a convergence rate for the total error. 

\subsubsection{Truncation Error}We now consider the space of domains in $\mathscr D$ with the Hausdorff distance $d_H$, which makes $(\mathscr D, d_H)$ a metric space. We then define the map $S: \mathscr D \to H_0^1(\mathscr D)$ that maps a domain $D(\bsy)$ from $\mathscr D$  to the solution $u$ of the variational Poisson problem \eqref{DjKaOrSc:eq:nonpullback} in $D(\bsy)$. In what follows, we identify the solutions $u_s \in H_0^1(D(\bsy))$ with their zero extension in $H_0^1(\mathscr D)$.

\begin{lemma}
    \label{DjKaOrSc:lemma: continuity forward operator}
    Let $f \in L^2(\mathscr D)$ and $D_{\rm{ref}} \subset \mathbb{R}^d$ be a bounded reference domain. Then, the map $S: \mathscr D \to H_0^1(\mathscr D)$ is continuous.
\end{lemma}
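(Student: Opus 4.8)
The plan is to prove sequential continuity: given admissible domains $D_n := D(\bsy_n)$ converging to $D := D(\bsy)$ in the Hausdorff metric $d_H$, I will show that the zero-extended solutions $u_n := S(D_n)$ converge to $u := S(D)$ strongly in $H_0^1(\mathscr D)$. The whole argument rests on the uniform regularity furnished by the standing assumption (A1): because every admissible domain is the image of $D_{\rm ref}$ under a $\mathcal C^1$-diffeomorphism whose $\mathcal C^1$-norm and inverse $\mathcal C^1$-norm are bounded by $C$ (equivalently, with the singular values of $J$ trapped between $\sigma_{\min}$ and $\sigma_{\max}$), the family $\{D(\bsy)\}_{\bsy\in U}$ satisfies a uniform cone condition and is contained in the fixed bounded hold-all $\mathscr D$. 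This uniform Lipschitz character is precisely what excludes the ``crushed ice'' pathologies in which Hausdorff convergence of domains fails to force convergence of the Dirichlet solutions.

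First I would record a uniform a priori bound. Since all $D_n\subset\mathscr D$ with $\mathscr D$ bounded, the Poincar\'e inequality holds on $H_0^1(\mathscr D)$ with a single constant $C_{\mathscr D}$, so testing the weak formulation on $D_n$ against $u_n$ and using the zero extension gives $\|u_n\|_{H_0^1(\mathscr D)}=\|\nabla u_n\|_{L^2(\mathscr D)}\le C_{\mathscr D}\|f\|_{L^2(\mathscr D)}$, uniformly in $n$. Hence a subsequence (not relabelled) satisfies $u_n\rightharpoonup u^{*}$ weakly in $H_0^1(\mathscr D)$, and it remains to identify $u^{*}=u$ and to upgrade to strong convergence.

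The core step is to show $u^{*}\in H_0^1(D)$ and that $u^{*}$ solves the Poisson problem on $D$; here the uniform cone condition enters twice. For the equation, I would fix $\varphi\in\mathcal C_c^\infty(D)$: since $\operatorname{supp}\varphi$ is a compact subset of the open set $D$, and the uniform (exterior) cone condition guarantees that $d_H(D_n,D)\to 0$ entails Hausdorff convergence of the complements as well, one obtains $\operatorname{supp}\varphi\subset D_n$ for all large $n$, so that $\varphi$ is an admissible test function and $\int_{\mathscr D}\nabla u_n\cdot\nabla\varphi=\int_{\mathscr D}f\varphi$; passing to the weak limit yields $\int_{D}\nabla u^{*}\cdot\nabla\varphi=\int_{D}f\varphi$. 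For the boundary condition, each $u_n$ vanishes outside $D_n$ (quasi-everywhere), and the uniform (interior) cone condition provides the exterior stability of the weak limit, forcing $u^{*}$ to vanish outside $D$, i.e.\ $u^{*}\in H_0^1(D)$. By density of $\mathcal C_c^\infty(D)$ in $H_0^1(D)$, $u^{*}$ is then the unique weak solution on $D$, whence $u^{*}=u$.

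Strong convergence finally follows from an energy identity: $\|\nabla u_n\|_{L^2(\mathscr D)}^2=\int_{\mathscr D}f\,u_n\to\int_{\mathscr D}f\,u^{*}=\|\nabla u\|_{L^2(\mathscr D)}^2$, where the middle limit uses $u_n\rightharpoonup u^{*}$ together with $f\in L^2(\mathscr D)$; weak convergence plus convergence of norms in the Hilbert space $H_0^1(\mathscr D)$ yields $u_n\to u$ strongly. Since the limit is independent of the extracted subsequence, a standard subsequence argument promotes this to convergence of the full sequence, establishing continuity of $S$. I expect the genuine obstacle to be the third step, namely the exterior stability $u^{*}\in H_0^1(D)$ and the eventual inclusion $\operatorname{supp}\varphi\subset D_n$, since these are exactly the points at which Hausdorff convergence alone is insufficient and one must invoke the uniform cone/Lipschitz regularity from (A1)---as developed in the shape-optimization literature on Mosco and $\gamma$-convergence---to convert Hausdorff convergence of the domains into convergence of their complements and of the associated Sobolev spaces.
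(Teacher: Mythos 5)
Your proposal is correct in outline but takes a genuinely different route from the paper, and it has one gap worth naming. The paper argues directly: it reduces (``without loss of generality'') to a monotone sequence $D_n \nearrow D$, forms the difference $z_n = u - u_n$, asserts that $-\Delta z_n = \tilde f := f\mathbf 1_{D\setminus D_n}$ in $D$ with $z_n$ harmonic in $D_n$, obtains $z_n \to 0$ in $L^2$ by mollification and the maximum principle, and finishes with the energy identity $\int_D |\nabla z_n|^2 = \int_D \tilde f z_n$. Your scheme --- uniform energy bound via the Poincar\'e inequality on the hold-all, weak subsequential limit, identification of the limit through Mosco-type stability, energy upgrade from weak to strong convergence, and a subsequence-uniqueness argument --- is the standard shape-optimization argument. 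It buys robustness: you never form the difference equation across $\partial D_n$ (where the paper's weak formulation for $z_n$ silently discards the distributional surface term carried by the jump of $\partial_\nu u_n$, a term that does not vanish when testing with $z_n \notin H_0^1(D_n)$), and you need no monotonicity reduction. The price is that your two crucial steps --- the eventual inclusion $\mathrm{supp}\,\varphi \subset D_n$ for $\varphi \in \mathcal C_c^\infty(D)$, and the exterior stability $u^* \in H_0^1(D)$ of the weak limit --- are cited from the literature rather than proved, whereas the paper's argument, for all its informality, is self-contained.

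The gap: you attribute the uniform cone condition to (A1) alone, and that implication is not immediate. (A1) gives uniform $\mathcal C^1$ bounds on $\boldsymbol V(\cdot,\bsy)$ and $\boldsymbol V^{-1}(\cdot,\bsy)$, i.e.\ uniform bi-Lipschitz control, and bi-Lipschitz images of Lipschitz domains need not be Lipschitz (logarithmic-spiral-type examples). A cone condition for each individual image $D(\bsy)$ does hold, but it uses the genuine $\mathcal C^1$ (not merely $W^{1,\infty}$) regularity of $\boldsymbol V(\cdot,\bsy)$ up to $\overline{D_{\rm ref}}$, and \emph{uniformity} of the cone parameters over $\bsy\in U$ requires equicontinuity of the Jacobians $\{J(\cdot,\bsy)\}_{\bsy\in U}$, which (A1) does not provide. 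This can be repaired from the paper's other assumptions: by (A2) one has $\|J(\cdot,\bsy)-J(\cdot,\bsy')\|_{L^\infty(D_{\rm ref})} \le C\sum_{j\ge 1} b_j\, |y_j - y_j'|$, so under the summability implied by (A5) the map $\bsy \mapsto J(\cdot,\bsy)$ is continuous from the compact product space $U$ into $\mathcal C^0(\overline{D_{\rm ref}})$; its image is therefore compact, hence uniformly equicontinuous by Arzel\`a--Ascoli, which yields cone parameters uniform in $\bsy$. With that supplement, and with precise citations for the stability results in the uniform cone class (Chenais; Henrot--Pierre), where Hausdorff convergence of the domains does imply convergence of the complements and Mosco convergence of the spaces $H_0^1(D_n)$, your argument goes through.
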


\begin{proof}
    Since the result depends only on the distance of the domains and not on the perturbation field generating them, we do not specify whether $\bsy \in U$ or $\bsy \in U_s$ for some $s \in \mathbb{N}$. For the same reason, we also do not specify whether a solution $u$ solves \eqref{DjKaOrSc:eq:nonpullback} or its truncated version.
    
    Given a domain realization $\bsV(D_{\rm{ref}}, \bsy) = D(\bsy)$, we want to prove that for any sequence $(D_n(\bsy))_n \to D(\bsy)$, $(D_n(\bsy))_n \subset \mathscr D$ $\forall n \in \mathbb{N}$, we have $u_{n} \to u$ in $H_0^1(\mathscr D)$, where $u_{n}$ is the solution of the Poisson problem in $D_n(\bsy)$. 

    Without loss of generality, we consider an increasing sequence $D_n(\bsy) \nearrow D(\bsy)$, with $d_{H}(D_n(\bsy), D(\bsy)) < 1/n$, and define $z_n = u - u_{n} \in H_0^1(\mathscr D)$ and $\tilde{f} = f\cdot\mathbf{1}|_{D(\bsy)\backslash D_n(\bsy)}$. Our goal is then to prove $z_n \to 0$. By definition, $z_n$ satisfies
    \begin{align}   
        \begin{split}
        \label{DjKaOrSc:eq:solution-difference}
        - \Delta z_n = \tilde{f} \hspace{1.5em} &\text{in } D(\bsy), \\
        z_n = 0 \hspace{1.5em} &\text{on } \partial D(\bsy),
        \end{split}
    \end{align}
    and in particular, there holds $\Delta z_n = 0$ in $D_n(\bsy)$.

    Consider now $\mu_\epsilon * z_n =: z_{n}^\epsilon \in \mathcal C^\infty(\mathscr D)$. By the continuity of $z_{n}^\epsilon$ and the homogeneous boundary condition, we have that $z_{n}^\epsilon|_{\partial D_n(\bsy)} \xrightarrow[]{n \to \infty} 0$. Further, since $z_{n}^\epsilon$ is harmonic in $D_n(\bsy)$, it satisfies the maximum principle, so
    $$
        z_{n}^\epsilon|_{D(\bsy)} \xrightarrow[]{n \to \infty} 0,
    $$
    and by the convergence properties of mollifiers (cf. \cite{DjKaOrSc:GiTr2001}), we have
    $$
        z_{n}^\epsilon \xrightarrow[]{\epsilon \to \infty} z_{n} \xrightarrow[]{n \to \infty} 0 \text{ in } L^2(\mathscr D).
    $$

    On the other hand, $z_n$ is a weak solution of \eqref{DjKaOrSc:eq:solution-difference} if and only if
    $$
        \int_{D(\bsy)} \nabla z_n\cdot \nabla v \,{\rm d}\bsx= \int_{D(\bsy)} \tilde{f} v\,{\rm d}\bsx
    $$
    for all $v \in H_0^1(\mathscr D)$. In particular, choose $v = z_n$, and then there holds
    $$
        \int_{D(\bsy)} |\nabla z_n|^2 \,{\rm d}\bsx = \left| \int_{D(\bsy)} \tilde{f} z_n\, {\rm d}\bsx \right|  \leq \| \tilde{f} \|_{L^2(D(\bsy))} \| z_n \|_{L^2(D(\bsy))} \to 0,
    $$
    since both $\tilde{f}, z_n \to 0$ in $L^2(D(\bsy))$. Hence, we conclude that $\|z_n\|_{H_0^1(\mathscr D)} \xrightarrow[]{n \to \infty} 0$.\qed
\end{proof}

In order to obtain convergence rates for the dimension truncation error, we will need additional natural assumptions on the truncated perturbation field. 

\medskip

\noindent \textbf{Further assumptions on the perturbation field $\bsV$}

\begin{addmargin}[1.3em]{0em}
\begin{enumerate}
    \item[(A4)]  For all $\bsy \in U$, $\bsy_s \in U_s$, such that $\bsy|_{U_s} = \bsy_s$ there holds%
    \begin{equation*}
        \lim_{s \to \infty}\| \bsV_s(\cdot, \bsy_s) - \bsV(\cdot, \bsy) \|_{\mathcal C^1(\overline{D(\bsy)})} \to 0.
    \end{equation*}
    \item[(A5)] There exists $p\in(0,1)$, such that the sequence  $\bsb$ in (A3) satisfies $\boldsymbol b\in\ell^p(\mathbb N)$ and $b_1\geq b_2\geq \dots \geq 0$.%
\end{enumerate}
\end{addmargin}

\begin{theorem}
    \label{DjKaOrSc:thm: dimension-truncation}
    Suppose that assumptions {\rm (A1)--(A5)} hold. 
    Then
    $$
        \Bigg\| \int_{U} \widehat u(\cdot, \boldsymbol{y})\,{\rm d}\boldsymbol{y} - \int_{U_s} \widehat u_s(\cdot, \bsy)\,{\rm d}\bsy \Bigg\|_{H_0^1(D_{\rm ref})} \leq C_{\rm dim} s^{-2/p+1},
    $$
    where the constant $C_{\rm dim} > 0$ is independent of the dimension $s$. Further, let $\widehat{\mathcal{O}}: H_0^1(D_{\rm ref}) \to \mathbb{R}$ be an arbitrary linear and bounded functional. Then 
    $$
        \Bigg| \int_{U} \widehat{\mathcal{O}}(\widehat u(\cdot,\boldsymbol{y}))\,{\rm d}\boldsymbol{y} - \int_{U_s} \widehat{\mathcal{O}}(\widehat u_s(\cdot,\boldsymbol{y}))\,{\rm d}\boldsymbol{y} \Bigg| \leq C_{\rm dim} \|\widehat{\mathcal{O}}\| s^{-2/p+1},
    $$
    where $\| \widehat{\mathcal{O}} \|$ is the operator norm of $\widehat{\mathcal{O}}$ and the constant $C_{\rm dim} > 0$ is as above.
\end{theorem}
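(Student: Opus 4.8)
The plan is to collapse the truncation error into a single integral over the full space $U$ and then to harvest the doubled rate from the symmetry of the uniform prior. First I would note that the truncated field satisfies $\bsV_s(\cdot,\bsy)=\bsV(\cdot,(y_1,\dots,y_s,0,0,\dots))$; writing $\bsy_s:=(y_1,\dots,y_s,0,0,\dots)\in U$ for the zero extension, we have $\widehat u_s(\cdot,\bsy)=\widehat u(\cdot,\bsy_s)$, which depends only on the first $s$ coordinates, so integrating the inert coordinates (each over $[-1/2,1/2]$ with unit mass) gives $\int_{U_s}\widehat u_s(\cdot,\bsy)\,{\rm d}\bsy=\int_U\widehat u(\cdot,\bsy_s)\,{\rm d}\bsy$. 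The quantity to estimate is therefore
$$
\Big\|\int_U\big(\widehat u(\cdot,\bsy)-\widehat u(\cdot,\bsy_s)\big)\,{\rm d}\bsy\Big\|_{H_0^1(D_{\rm ref})}.
$$
Well-posedness of the infinite-dimensional integral and of this difference I would justify via Lemma~\ref{DjKaOrSc:lemma: continuity forward operator} and assumption~(A4), which secure continuity of $\bsy\mapsto\widehat u(\cdot,\bsy)$ and the convergence underlying the definition of $\int_U$, with the dominating bounds for dominated convergence supplied by Theorem~\ref{DjKaOrSc:thm: forward regularity}.

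Next I would apply the fundamental theorem of calculus along the segment from $\bsy_s$ to $\bsy$, which only moves the tail coordinates $j>s$:
$$
\widehat u(\cdot,\bsy)-\widehat u(\cdot,\bsy_s)=\int_0^1\sum_{j>s}y_j\,\partial_{y_j}\widehat u\big(\cdot,\bsy_s+t(\bsy-\bsy_s)\big)\,{\rm d}t.
$$
The crux is a second, one-dimensional expansion in $y_j$ alone: the evaluation point has $j$-th coordinate $ty_j$, and writing $\partial_{y_j}\widehat u$ there as its value at $j$-th coordinate $0$ plus a first-order remainder splits off a term of the form $y_j\cdot(\text{quantity independent of }y_j)$. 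Because the prior is the symmetric product measure on $[-1/2,1/2]^{\mathbb N}$, this term integrates to $\int_{-1/2}^{1/2}y_j\,{\rm d}y_j=0$. This cancellation of the first-order contribution is precisely what turns the naive rate into its square, and I expect it to be the decisive step.

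What remains is a genuine second-order remainder $t\,y_j^2\int_0^1\partial_{y_j}^2\widehat u\,{\rm d}\tau$, evaluated at points that, being coordinatewise convex combinations of elements of $[-1/2,1/2]$, again lie in $U$; the uniform Gevrey estimate of Theorem~\ref{DjKaOrSc:thm: forward regularity} with $\bsnu=2\bse_j$ then gives $\|\partial_{y_j}^2\widehat u(\cdot,\bsy)\|_{H_0^1(D_{\rm ref})}\le c_1c_2^2(2!)^\beta b_j^2$ uniformly in $\bsy$. Using $\int_{-1/2}^{1/2}y_j^2\,{\rm d}y_j=\tfrac1{12}$, $\int_0^1 t\,{\rm d}t=\tfrac12$, and unit mass in the remaining coordinates, I would arrive at
$$
\Big\|\int_U\big(\widehat u(\cdot,\bsy)-\widehat u(\cdot,\bsy_s)\big)\,{\rm d}\bsy\Big\|_{H_0^1(D_{\rm ref})}\le\frac{c_1c_2^2(2!)^\beta}{24}\sum_{j>s}b_j^2.
$$

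Finally I would convert the tail sum into the stated rate. By assumption~(A5) the sequence $\bsb$ is nonincreasing and lies in $\ell^p$, so $b_j\le\|\bsb\|_{\ell^p}\,j^{-1/p}$; since $p\in(0,1)$ forces $2/p>1$, comparison with an integral gives $\sum_{j>s}b_j^2\le\|\bsb\|_{\ell^p}^2\sum_{j>s}j^{-2/p}\le C\,s^{-2/p+1}$, yielding the first claim with an $s$-independent constant $C_{\rm dim}$. For the second claim I would pull the bounded linear functional $\widehat{\mathcal O}$ inside the integral: by linearity it commutes with $\int_U$ and the first-order term still vanishes by symmetry, while boundedness gives $|\widehat{\mathcal O}(\cdot)|\le\|\widehat{\mathcal O}\|\,\|\cdot\|_{H_0^1(D_{\rm ref})}$, so the same chain reproduces the bound with the extra factor $\|\widehat{\mathcal O}\|$. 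The main obstacle I anticipate is not the algebra but the rigorous justification of the interchanges of summation, integration over the infinite-dimensional $U$, and differentiation along the path, which is exactly where the uniform Gevrey estimates together with assumption~(A4) carry the weight.
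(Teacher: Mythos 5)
Your proof is correct, but it takes a genuinely different route from the paper's. The paper does not re-derive the rate at all: its entire proof consists of invoking the general dimension-truncation theorem of Guth and Kaarnioja (\cite[Theorem 4.3]{DjKaOrSc:gk22}), whose hypotheses are the Gevrey bounds of Theorem~\ref{DjKaOrSc:thm: forward regularity} plus the pointwise condition \eqref{DjKaOrSc:eq:condition-truncation}, and then verifying that condition via Lemma~\ref{DjKaOrSc:lemma: continuity forward operator} and assumption (A4). You instead re-prove the quantitative estimate from scratch, and your mechanism is different from (and in this specific setting cleaner than) the high-order Taylor expansion underlying the cited theorem: by re-anchoring each term $y_j\,\partial_{y_j}\widehat u$ at $j$-th coordinate zero before exploiting the zero mean of the prior, your remainder contains only \emph{diagonal} second derivatives $\partial_{y_j}^2\widehat u$, so you get $\sum_{j>s}b_j^2\lesssim s^{1-2/p}$ directly, with no cross terms $b_ib_j$ and no need to push the expansion to an order depending on $p$; as a bonus you obtain the explicit constant $C_{\rm dim}=\frac{c_1c_2^2 2^{\beta}}{24}\,\frac{p}{2-p}\,\|\bsb\|_{\ell^p}^2$. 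Two remarks. First, the technical point you flag (differentiation along the segment and interchange of $\sum_{j>s}$ with the integrals over the infinite-dimensional $U$) is real but is resolved exactly as you anticipate: run your argument with only finitely many active tail coordinates, where every step is finite-dimensional calculus, obtain a bound uniform in the number of active coordinates, and pass to the limit by dominated convergence --- and that limit passage is precisely where (A4) and Lemma~\ref{DjKaOrSc:lemma: continuity forward operator} are needed, i.e., the same ingredients the paper uses to verify the hypothesis of the cited theorem, so nothing essential is missing. Second, your identity $\widehat u_s(\cdot,\bsy)=\widehat u(\cdot,\bsy_s)$ holds exactly by uniqueness of the solution on $D(\bsy_s)$ and requires no appeal to (A4); (A4) enters only in the limit passages. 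What the paper's route buys is brevity and reuse of a theorem covering other settings; what yours buys is a self-contained argument with transparent constants.
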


\begin{proof}
    The proof (see {\cite[Theorem 4.3]{DjKaOrSc:gk22}}) relies on the following condition: for a.e. $\bsy \in U$, $\bsy_s \in U_s$ such that $\bsy|_{U_s} = \bsy_s$ there holds
        \begin{equation}
            \| \widehat u(\cdot, \boldsymbol{y}) - \widehat u_s(\cdot, \bsy_s) \|_{H_0^1(D_{\rm ref})} \to 0 \text{ as } s \to \infty.
            \label{DjKaOrSc:eq:condition-truncation}
        \end{equation}
     
     By (A4), for $s \to \infty$ we have $\bsV_s(D_{\rm ref}, \bsy_s) \to \bsV(D_{\rm ref}, \bsy)$, and by Lemma \ref{DjKaOrSc:lemma: continuity forward operator} and the continuity of $\bsV$, this means that the corresponding solutions $u_s$, $u$ to \eqref{DjKaOrSc:eq:nonpullback} satisfy $\| u(\cdot, \bsy) - u_s(\cdot, \bsy_s) \|_{H_0^1(\mathscr D)} \to 0$. By the identity $\widehat u(\bsx, \bsy) = u(\bsV(\bsx, \bsy), \bsy)$ and assumption (A4), the convergence \eqref{DjKaOrSc:eq:condition-truncation} is satisfied.\qed
\end{proof}

Theorem \ref{DjKaOrSc:thm: dimension-truncation} together with our choice of observation operator yields an error estimate for the term ${\| \mathbb{E}^{\bsdelta}[\bsV] - \mathbb{E}_{s}^{\bsdelta}[\bsV_s]\|_{L^2(D_{\rm ref})}}$.
 
\subsubsection{Finite Element Error}
The following result controls the error when approximating $u_s$ by $u_{s,h}$, where $h$ is the mesh size, using a first-order FEM solver. In order to avoid the reference domain approximation error and to obtain higher regularity of the solution, we assume the additional regularity.

\medskip

\noindent \textbf{Further assumptions on the reference domain $D_{\rm ref}$ and perturbation field $\boldsymbol V$}
\begin{addmargin}[1.3em]{0em}
\begin{enumerate}
    \item [(A6)] The reference domain $D_{\rm ref}\subset\mathbb R^d$ is a convex and bounded polyhedron. %
    \item [(A7)] For each $\bsy\in U$, $\boldsymbol V(\cdot,\bsy)\!:\overline{D_{\rm ref}}\to\mathbb R^d$ is a $\mathcal C^2$-diffeomorphism.
\end{enumerate}
\end{addmargin}
\medskip

With these assumptions, by \cite[Theorem 3.2.1.2]{DjKaOrSc:Grisvard} we then have that for all $\bsy \in U_s$ $\widehat u_s(\cdot, \boldsymbol{y}) \in H^2(D_{\rm{ref}}) \cap H_0^1(D_{\rm{ref}})$, and thus $u_s(\cdot, \boldsymbol{y}) \in H^2(D(\bsy)) \cap H_0^1(D(\bsy))$. Hence, we can use the following result from \cite{DjKaOrSc:Qu2014}.

\begin{theorem}
    \label{DjKaOrSc:thm:FEM-error}
    Let assumptions {\rm (A1)--(A3)} and {\rm (A6)--(A7)} hold. Further, let $u_s(\cdot, \bsy) \in H_0^1(D(\boldsymbol{y}))$, $\bsy \in U_s$, be the exact solution of the variational problem \eqref{DjKaOrSc:eq:nonpullback} and $u_{s, h}$ its approximate solution obtained with a first-order FEM approximation with mesh size $h$. Moreover, let $u_s(\cdot, \bsy) \in \mathcal C^0(\overline{D(\boldsymbol{y})}) \cap H^{2}(D(\boldsymbol{y}))$. Then, the following \textit{a priori} error estimate in the norm of $L^2(D(\boldsymbol{y}))$ holds:
    $$
       \| u_s(\cdot, \bsy) - u_{s, h}(\cdot, \bsy) \|_{L^2(D(\boldsymbol{y}))} \leq C_{\rm FEM} h^{2} \| u_s(\cdot, \bsy) \|_{H^{2}(D(\boldsymbol{y}))},
    $$
    with $C_{\rm FEM} = C({\rm diam}(D(\boldsymbol{y})), \hat{K})$ being a constant independent of $h$ and $u$, where $\hat{K}$ is the reference element.
\end{theorem}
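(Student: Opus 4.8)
The plan is to establish this classical estimate through the standard combination of C\'ea's lemma (giving energy-norm quasi-optimality) followed by the Aubin--Nitsche duality argument (which upgrades the rate from $O(h)$ in $H^1$ to $O(h^2)$ in $L^2$). Since the statement is cited from \cite{DjKaOrSc:Qu2014}, I regard the task as verifying that hypotheses {\rm (A1)--(A3)} and {\rm (A6)--(A7)} supply exactly the structure this argument needs. It is convenient to carry out the analysis on the reference configuration: because $D_{\rm ref}$ is a convex bounded polyhedron by {\rm (A6)}, a shape-regular triangulation tiles it exactly, so no geometric domain-approximation error arises, and the $\mathcal C^2$-diffeomorphism property {\rm (A7)} lets me transfer $H^2$-estimates between $D_{\rm ref}$ and $D(\bsy)$ with constants controlled by $\bsV$ and its inverse.

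First I would record Galerkin orthogonality, $a_{\bsy}(u_s - u_{s,h}, v_h)=0$ for every $v_h$ in the first-order finite element space, where $a_{\bsy}$ is the pulled-back Dirichlet form. By the uniform singular-value bounds $\sigma_{\min}\le\sigma(J(\bsx,\bsy))\le\sigma_{\max}$ implied by {\rm (A1)}, this form is bounded and coercive on $H_0^1$ uniformly in $\bsy$, so C\'ea's lemma yields $\|u_s - u_{s,h}\|_{H_0^1}\le C\inf_{v_h}\|u_s - v_h\|_{H_0^1}$. Choosing $v_h$ to be the piecewise-linear interpolant and invoking the standard interpolation estimate together with the regularity $\widehat u_s(\cdot,\bsy)\in H^2(D_{\rm ref})$ (guaranteed on the convex domain by \cite[Theorem~3.2.1.2]{DjKaOrSc:Grisvard} via {\rm (A6)--(A7)}, hence $u_s(\cdot,\bsy)\in H^2(D(\bsy))$) gives the intermediate energy bound $\|u_s - u_{s,h}\|_{H_0^1(D(\bsy))}\le C\,h\,\|u_s\|_{H^2(D(\bsy))}$.

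Second, to gain the extra power of $h$, I would set up the adjoint problem: find $w\in H_0^1$ with $a_{\bsy}(v,w)=(u_s - u_{s,h},v)_{L^2}$ for all $v$. Convexity of the domain again furnishes $H^2$ elliptic regularity, so $\|w\|_{H^2}\le C\|u_s - u_{s,h}\|_{L^2}$. Testing with $v=u_s - u_{s,h}$, subtracting the interpolant $I_h w$ by Galerkin orthogonality, and applying continuity of $a_{\bsy}$ followed by the interpolation estimate for $w$ gives
$$\|u_s - u_{s,h}\|_{L^2}^2 = a_{\bsy}(u_s - u_{s,h},\, w - I_h w) \le C\,\|u_s - u_{s,h}\|_{H_0^1}\,h\,\|w\|_{H^2},$$
whereupon inserting the energy bound and the regularity estimate for $w$ and dividing by $\|u_s - u_{s,h}\|_{L^2}$ yields the claimed $O(h^2)$ rate, with $C_{\rm FEM}$ depending only on ${\rm diam}(D(\bsy))$ and the reference element $\hat K$.

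The main obstacle is not the duality mechanics, which are routine, but verifying that the $H^2$-regularity used twice---for the primal solution and for the adjoint solution---holds with constants uniform in $\bsy$ and that these survive the pushforward from the polyhedral reference domain to the curved realization $D(\bsy)$. This is precisely where {\rm (A6)} (convexity, hence $H^2$-regularity and exact meshing) and {\rm (A7)} (the $\mathcal C^2$-diffeomorphism, which by the chain rule gives equivalence of the $H^2(D_{\rm ref})$ and $H^2(D(\bsy))$ norms and a uniformly shape-regular pushforward of the reference triangulation) enter, and some care is needed to confirm that the resulting constant absorbs the $\bsy$-dependence into ${\rm diam}(D(\bsy))$ and $\hat K$ alone.
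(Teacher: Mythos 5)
Your proposal is correct and is in substance the same as the paper's: the paper proves this theorem simply by citing \cite[Theorem 4.7]{DjKaOrSc:Qu2014} (adding only that the hypothesis $u_s(\cdot,\bsy)\in\mathcal C^0(\overline{D(\bsy)})$ is automatic from the Kondrachov embedding $H^2(D(\bsy))\hookrightarrow \mathcal C^0(\overline{D(\bsy)})$ for $d<4$), and the C\'ea-plus-Aubin--Nitsche argument you spell out, with $H^2$ regularity on the convex reference domain supplied by \cite[Theorem 3.2.1.2]{DjKaOrSc:Grisvard} via (A6)--(A7), is precisely the standard proof underlying that citation. The only point the paper states that you leave implicit is the embedding justifying the $\mathcal C^0$ hypothesis, but since that continuity is assumed in the theorem statement itself, this is not a gap.
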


\begin{proof}
     See \cite[Theorem 4.7]{DjKaOrSc:Qu2014}, noting that $u_s(\cdot, \bsy) \in \mathcal C^0(\overline{D(\boldsymbol{y})}) \cap H^{2}(D(\boldsymbol{y}))$, for all $\bsy \in U_s$, since by the Kondrachov theorems one has the compact injection $H^{2}(D(\boldsymbol{y})) \hookrightarrow \mathcal C^0(\overline{D(\boldsymbol{y})})$ for $d < 4$.\qed
\end{proof}

\subsubsection{Quasi-Monte Carlo Error}
\label{DjKaOrSc:subsec:QMC error}

The following result states a suitable choice of {\em product and order dependent} (POD) weights ensuring dimension-independent QMC convergence for the approximation of the posterior mean.

\begin{theorem} \label{DjKaOrSc:thm: inverse exponential}
Let $\bsy \in U$. Under assumptions {\rm (A1)--(A3)} and {\rm (A6)--(A7)},
there holds
$$
\big|\partial_{\bsy}^{\bsnu}{\rm e}^{-\frac12 \|\boldsymbol\delta-\mathcal G(\bsy)\|_{\Gamma^{-1}}^2}\big|\leq c_3 c_4^{|\boldsymbol{\nu}|}  \mathop{(|\boldsymbol{\nu}|! )^\beta}\boldsymbol{b}^{\boldsymbol{\nu}},
$$
where
$$
c_3=\frac{1}{2^{\beta}}\cdot 3.47^k,\quad c_4=2^{\beta} c_1 c_2 \tau_{\min}^{-1/2},
$$
and $0<\tau_{\min}\leq 1$ is a lower bound on the smallest eigenvalue of $\Gamma$.
\end{theorem}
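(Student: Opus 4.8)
The plan is to regard the likelihood as a composition $\theta(\bsy):=G(\mathcal G(\bsy))$ of the analytic data-misfit map $G(\bsw):=\exp(-\tfrac12\|\bsdelta-\bsw\|_{\Gamma^{-1}}^2)$ with the parameter-to-observation map $\mathcal G$, and to propagate Gevrey bounds through this composition by a Fa\`a di Bruno argument. First I would establish a \emph{pointwise} Gevrey bound for each component $\mathcal G_i(\bsy)=\widehat u(\bsx_i,\bsy)$. The identity $\partial_{\bsy}^{\bsnu}u(\bsV(\bsx,\bsy),\bsy)=\partial_{\bsy}^{\bsnu}\widehat u(\bsx,\bsy)$ reduces this to controlling point values of the parametric derivatives of the pullback solution. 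Theorem~\ref{DjKaOrSc:thm: forward regularity} supplies the $H_0^1(D_{\rm ref})$ bound $c_1c_2^{|\bsnu|}(|\bsnu|!)^\beta\bsb^{\bsnu}$; invoking assumptions (A6)--(A7), elliptic $H^2$-regularity, and the Sobolev embedding $H^2(D_{\rm ref})\hookrightarrow\mathcal C^0(\overline{D_{\rm ref}})$ (valid for $d\leq 3$) upgrades this to the uniform pointwise estimate $\sup_{\bsy\in U}|\partial_{\bsy}^{\bsnu}\mathcal G_i(\bsy)|\leq c_1c_2^{|\bsnu|}(|\bsnu|!)^\beta\bsb^{\bsnu}$ for each $i$, with the same growth structure.

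Next I would bound the derivatives of the outer map $G$. Writing $\Gamma^{-1}=\Gamma^{-1/2}\Gamma^{-1/2}$ and substituting $\bsv=\Gamma^{-1/2}(\bsdelta-\bsw)$ turns $G$ into a standard isotropic Gaussian $\exp(-\tfrac12\|\bsv\|^2)=\prod_{i=1}^k e^{-v_i^2/2}$, whose mixed derivatives factor into one-dimensional Hermite functions. Cram\'er-type estimates for these Hermite functions, combined with the chain rule through $\Gamma^{-1/2}$, then control $\sup_{\bsw}|\partial_{\bsw}^{\bsalpha}G(\bsw)|$: the $\bsnu$-independent part is the product $3.47^k$ over the $k$ measurement channels, while the $\bsalpha$-dependent part contributes $\tau_{\min}^{-|\bsalpha|/2}\sqrt{\bsalpha!}$ together with geometric constants, each $\bsw$-derivative producing one factor $\tau_{\min}^{-1/2}$ from $\Gamma^{-1/2}$. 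Here the hypothesis $0<\tau_{\min}\leq 1$ is used to bound $\tau_{\min}^{-|\bsalpha|/2}$ by $\tau_{\min}^{-|\bsnu|/2}$ since $|\bsalpha|\leq|\bsnu|$, and the channels contribute only the uniform prefactor $3.47^k$ that becomes part of $c_3$.

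Finally I would combine the two estimates through the multivariate Fa\`a di Bruno formula, which expands $\partial_{\bsy}^{\bsnu}\theta$ as a sum over set partitions of $\bsnu$ into blocks $\bsmu_1,\dots,\bsmu_\ell$, each term being $\partial_{\bsw}^{\bsalpha}G|_{\mathcal G(\bsy)}$ (with $|\bsalpha|=\ell$) times a product of the block derivatives $\partial_{\bsy}^{\bsmu_i}\mathcal G$. Inserting the bounds above, each block contributes $c_1c_2^{|\bsmu_i|}(|\bsmu_i|!)^\beta\bsb^{\bsmu_i}$, and since $\sum_i\bsmu_i=\bsnu$ the sequence factors assemble to $\bsb^{\bsnu}$ while $\prod_i(|\bsmu_i|!)^\beta\leq(|\bsnu|!)^\beta$. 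All remaining geometric factors — the $2^{|\bsnu|}$-type constants produced by the partition sums, the $c_1c_2$ from the PDE solution, and the $\tau_{\min}^{-1/2}$ from the Gaussian — collect into $c_4^{|\bsnu|}=(2^\beta c_1c_2\tau_{\min}^{-1/2})^{|\bsnu|}$, while the outer Gaussian factor together with the normalization yields the prefactor $c_3=2^{-\beta}3.47^k$.

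The main obstacle is precisely this last combinatorial bookkeeping. One must verify that the factorial weights coming from the $\ell$-th order derivative of the Gaussian and from the $\ell$ blocks combine, after summation over all partitions, to \emph{exactly} $(|\bsnu|!)^\beta$ times a geometric factor — this is where the Gevrey exponent $\beta\geq 1$ is indispensable, since it provides enough super-multiplicativity of the factorials — and, crucially, that the $k$ observation components enter only through the uniform prefactor $3.47^k$ rather than through a factor growing with $|\bsnu|$. Keeping these two features separate while tracking the exact constants $c_3$ and $c_4$ is the delicate part of the argument.
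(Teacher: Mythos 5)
The paper does not prove this theorem inline: its proof is a one-line citation to \cite[Lemma~5.3]{DjKaOrSc:ks24}, and the strategy you describe --- composing the Gaussian misfit with the observation map, bounding the outer derivatives by Cram\'er-type Hermite estimates, and assembling everything with the multivariate Fa\`a di Bruno formula --- is indeed the strategy behind that cited proof, so your route is the intended one in spirit. There is, however, a genuine gap at your first step. You claim that the pointwise bound $\sup_{\bsy\in U}|\partial_{\bsy}^{\bsnu}\mathcal G_i(\bsy)|\leq c_1c_2^{|\bsnu|}(|\bsnu|!)^\beta\bsb^{\bsnu}$ follows from Theorem~\ref{DjKaOrSc:thm: forward regularity} together with elliptic $H^2$ regularity and Sobolev embedding. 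Theorem~\ref{DjKaOrSc:thm: forward regularity} controls only $\|\partial_{\bsy}^{\bsnu}\widehat u(\cdot,\bsy)\|_{H_0^1(D_{\rm ref})}$, and point evaluation is not a bounded functional on $H_0^1(D_{\rm ref})$ for $d\in\{2,3\}$. Elliptic regularity under (A6)--(A7) applies in the first instance to $\widehat u(\cdot,\bsy)$ itself; to apply it to $\partial_{\bsy}^{\bsnu}\widehat u$ you must differentiate the pullback PDE $\bsnu$ times in $\bsy$ and run a separate induction bounding $\|\partial_{\bsy}^{\bsnu}\widehat u(\cdot,\bsy)\|_{H^2(D_{\rm ref})}$, whose right-hand sides involve Gevrey bounds on derivatives of $A$ and $f_{\rm ref}$; the constants produced by such an induction are in general \emph{not} $c_1,c_2$. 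Since the advertised constant $c_4=2^\beta c_1c_2\tau_{\min}^{-1/2}$ is built from exactly these constants, this step cannot be waved through: as written it is an assertion, not a proof, and it is essentially the hypothesis under which the cited lemma operates.

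The second issue is that you correctly identify the partition bookkeeping as the crux but then leave it as an obstacle rather than resolving it, and the resolution is not routine. A sum over set partitions of $|\bsnu|$ elements has Bell-number cardinality, so bounding each term's product of block factorials by $(|\bsnu|!)^\beta$ and then counting terms overshoots by roughly another factor of $|\bsnu|!$. One needs the identity that the sum over partitions into $\ell$ blocks of $\prod_i|\bsmu_i|!$ equals $\binom{|\bsnu|-1}{\ell-1}\,|\bsnu|!/\ell!$ (a Lah-number count of ordered blocks), so that the $1/\ell!$ --- together with the fact that Cram\'er's inequality yields only $\sqrt{\ell!}$ growth for the $\ell$-th Gaussian derivative --- collapses the partition sum to $(|\bsnu|!)^\beta$ times a geometric factor. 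Likewise, your claim that the $k$ channels enter only through the prefactor $3.47^k$ is precisely the other delicate point: in the multivariate Fa\`a di Bruno expansion each block must be assigned to one of the $k$ components of the observation map, and a naive bound on this coloring sum places a factor of $k$ (or a term like $\mathrm{e}^{k^2/2}$) into the estimate, which would contradict the $k$-independence of $c_4$ in the statement. Your sketch names both facts as ``to be verified'' but supplies neither, and they are exactly where the stated constants $c_3$ and $c_4$ come from.
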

\begin{proof} See \cite[Lemma 5.3]{DjKaOrSc:ks24}.\qed \end{proof} 

\begin{theorem}
\label{DjKaOrSc:thm:regularity-expected-domain}
Let $\bsy \in U$. Under assumptions {\rm (A1)--(A3)} and {\rm (A6)--(A7)}, we have
$$
\big|\partial_{\bsy}^{\bsnu}\big(\boldsymbol V(\bsx,\bsy){\rm e}^{-\frac12 \|\boldsymbol\delta-\mathcal G(\bsy)\|_{\Gamma^{-1}}^2}\big)\big|\leq c_5c_{6}^{|\bsnu|} (\mathop{\left( |\boldsymbol{\nu}| + 1 \right)!})^\beta \boldsymbol{b}^{\boldsymbol{\nu}},
$$
where $c_5=Cc_3$ and $c_{6}=\max\{1,c_4\}$.
\end{theorem}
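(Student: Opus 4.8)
The plan is to expand the parametric derivative of the product by the multivariate Leibniz rule and then substitute the two Gevrey-type bounds that are already at hand: assumption (A2) for the factor $\boldsymbol V(\bsx,\bsy)$ and Theorem~\ref{DjKaOrSc:thm: inverse exponential} for the exponential factor. Writing $g(\bsy):={\rm e}^{-\frac12\|\boldsymbol\delta-\mathcal G(\bsy)\|_{\Gamma^{-1}}^2}$, the Leibniz rule gives
$$
\partial_{\bsy}^{\bsnu}\big(\boldsymbol V(\bsx,\bsy)\,g(\bsy)\big)=\sum_{\boldsymbol m\leq\bsnu}\binom{\bsnu}{\boldsymbol m}\,\partial_{\bsy}^{\boldsymbol m}\boldsymbol V(\bsx,\bsy)\,\partial_{\bsy}^{\bsnu-\boldsymbol m}g(\bsy).
$$
By the triangle inequality (the Euclidean norm of the vector-valued product is bounded termwise, with $g$ scalar), using (A2) in the form $|\partial_{\bsy}^{\boldsymbol m}\boldsymbol V(\bsx,\bsy)|\leq C(|\boldsymbol m|!)^\beta\boldsymbol b^{\boldsymbol m}$ and Theorem~\ref{DjKaOrSc:thm: inverse exponential} in the form $|\partial_{\bsy}^{\bsnu-\boldsymbol m}g(\bsy)|\leq c_3c_4^{|\bsnu-\boldsymbol m|}(|\bsnu-\boldsymbol m|!)^\beta\boldsymbol b^{\bsnu-\boldsymbol m}$, I would reach
$$
\big|\partial_{\bsy}^{\bsnu}\big(\boldsymbol V(\bsx,\bsy)g(\bsy)\big)\big|\leq Cc_3\,\boldsymbol b^{\bsnu}\sum_{\boldsymbol m\leq\bsnu}\binom{\bsnu}{\boldsymbol m}c_4^{|\bsnu-\boldsymbol m|}(|\boldsymbol m|!)^\beta(|\bsnu-\boldsymbol m|!)^\beta.
$$
Here I have used $\boldsymbol b^{\boldsymbol m}\boldsymbol b^{\bsnu-\boldsymbol m}=\boldsymbol b^{\bsnu}$, and since $c_6=\max\{1,c_4\}\geq 1$ and $|\bsnu-\boldsymbol m|\leq|\bsnu|$, I may replace $c_4^{|\bsnu-\boldsymbol m|}$ by $c_6^{|\bsnu|}$ and pull it out of the sum.

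It then remains to estimate the purely combinatorial sum. Setting $n:=|\bsnu|$ and grouping the multi-indices $\boldsymbol m\leq\bsnu$ by the value $|\boldsymbol m|=j$, the Vandermonde convolution identity $\sum_{|\boldsymbol m|=j,\,\boldsymbol m\leq\bsnu}\binom{\bsnu}{\boldsymbol m}=\binom{n}{j}$ collapses the multivariate sum into
$$
\sum_{\boldsymbol m\leq\bsnu}\binom{\bsnu}{\boldsymbol m}(|\boldsymbol m|!)^\beta(|\bsnu-\boldsymbol m|!)^\beta=\sum_{j=0}^{n}\binom{n}{j}(j!)^\beta((n-j)!)^\beta=n!\sum_{j=0}^{n}\big(j!(n-j)!\big)^{\beta-1}.
$$
Because $\beta\geq 1$ and $j!(n-j)!=n!/\binom{n}{j}\leq n!$, each summand obeys $\big(j!(n-j)!\big)^{\beta-1}\leq (n!)^{\beta-1}$, so the sum is at most $(n+1)(n!)^\beta$; finally $(n+1)\leq(n+1)^\beta$ gives $(n+1)(n!)^\beta\leq((n+1)!)^\beta=((|\bsnu|+1)!)^\beta$. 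Combining with the prefactor $Cc_3c_6^{|\bsnu|}\boldsymbol b^{\bsnu}$ yields precisely the asserted bound with $c_5=Cc_3$ and $c_6=\max\{1,c_4\}$.

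The only genuinely nontrivial step is this combinatorial collapse together with the factorial estimate. The Vandermonde identity is what lets me bypass the multivariate binomial structure and reduce to a one-dimensional sum, while the appearance of $(|\bsnu|+1)!$ rather than $|\bsnu|!$ on the right-hand side is exactly the price of the surviving factor $(n+1)$ after the bound $j!(n-j)!\leq n!$; the degradation from $\beta$ to a shifted factorial is unavoidable and is what the statement anticipates. Everything else is a routine application of the product rule and the two input estimates, so I expect no further obstacles.
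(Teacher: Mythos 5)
Your proof is correct and takes essentially the same approach as the paper: Leibniz rule, insertion of the bounds from (A2) and Theorem~\ref{DjKaOrSc:thm: inverse exponential}, pulling out $Cc_3c_6^{|\bsnu|}\boldsymbol b^{\bsnu}$, collapsing the multivariate sum to $|\bsnu|!\sum_{\ell=0}^{|\bsnu|}\big(\ell!(|\bsnu|-\ell)!\big)^{\beta-1}$ via the Vandermonde convolution, and then the estimate $\ell!(|\bsnu|-\ell)!\leq|\bsnu|!$ together with $|\bsnu|+1\leq(|\bsnu|+1)^\beta$ to reach $((|\bsnu|+1)!)^\beta$. The only difference is cosmetic: you state the Vandermonde identity explicitly, whereas the paper performs that collapse implicitly.
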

\begin{proof} 
Using Theorem \ref{DjKaOrSc:thm: inverse exponential} and the Leibniz product rule there holds
    \begin{align*}
         \Big| \partial_{\bsy}^{\boldsymbol{\nu}} \left(\bsV(\boldsymbol{x},\boldsymbol{y}) {\rm e}^{-\frac{1}{2}\| \boldsymbol{\delta} - \mathcal{G}(\boldsymbol{y}) \|_{\Gamma^{-1}}^2} \right) \Big|  
        & =  \Big| \sum_{ \boldsymbol{m} \leq \boldsymbol{\nu}} \binom{\boldsymbol{\nu}}{\boldsymbol{m}} \partial_{\boldsymbol y}^{\boldsymbol{m}} \boldsymbol V(\boldsymbol{x},\boldsymbol{y}) \partial_{\boldsymbol y}^{\boldsymbol{\nu}-\boldsymbol{m}} {\rm e}^{-\frac{1}{2}\| \boldsymbol{\delta} - \mathcal{G}(\boldsymbol{y}) \|_{\Gamma^{-1}}^2} \Big|
        \\
        & = \sum_{ \boldsymbol{m} \leq \boldsymbol{\nu}} \binom{\boldsymbol{\nu}}{\boldsymbol{m}} C (\mathop{|\boldsymbol{m}|!})^\beta\boldsymbol b^{\boldsymbol m} c_7c_8^{|\bsnu-\boldsymbol m|}((|\bsnu-\boldsymbol m|)!)^\beta \boldsymbol b^{\bsnu-\boldsymbol m}
        \\
        & \leq C c_3 \max\{1,c_4\}^{|\bsnu|}\boldsymbol b^{\bsnu}\sum_{ \boldsymbol{m} \leq \boldsymbol{\nu}} \binom{\boldsymbol{\nu}}{\boldsymbol{m}} (\mathop{|\boldsymbol{m}|!} \mathop{|\boldsymbol{\nu} - \boldsymbol{m}|!})^\beta\\
                & \leq Cc_3 \max\{1,c_4\}^{|\bsnu|}\boldsymbol b^{\bsnu}|\bsnu|!\sum_{\ell=0}^{|\bsnu|} (\mathop{\ell!} (\mathop{|\boldsymbol{\nu}| - \ell)!})^{\beta-1}
        \\
        & \leq Cc_3 \max\{1,c_4\}^{|\bsnu|}\boldsymbol b^{\bsnu}|\bsnu|!(|\bsnu|!)^{\beta-1}\sum_{\ell=0}^{|\bsnu|} 1\\
        & \leq Cc_3\max\{1,c_4\}^{|\bsnu|}\boldsymbol b^{\bsnu} (\mathop{ (|\boldsymbol{\nu}| + 1)!})^\beta,
    \end{align*}
which proves the assertion.\qed
\end{proof}

\begin{theorem} \label{DjKaOrSc:thm: weights}
For $s \in \mathbb{N}$, $n$ a prime number, and weights $\boldsymbol{\gamma} = (\gamma_{\mathfrak{u}})$, a randomly shifted lattice rule with $n$ points in $s$ dimensions can be constructed by a CBC algorithm such that the rms error for approximating the finite dimensional integral $Z_{s,h}'(\bsdelta)$ satisfies, for all $\lambda \in (1/2, 1],$
    \begin{equation*}
        \sqrt{\mathbb{E}_{\boldsymbol{\Delta}}[|Z_{s,h}'(\bsdelta) - Z_{s,h,n}'(\bsdelta)|^2]} \leq \frac{c_5 C_{\boldsymbol{\gamma}}(\lambda)}{(n-1)^{1/(2\lambda)}},
    \end{equation*}
    where $\mathbb{E}_{\boldsymbol{\Delta}}[\cdot]$ denotes the expectation with respect to the random shift which is uniformly distributed over $[0,1]^s$, and
    \begin{equation*}
        C_{\boldsymbol{\gamma}}(\lambda) := \left( \sum_{|{\mathfrak{u}}| < \infty} \gamma_{{\mathfrak{u}}}^\lambda \bigg[\frac{2\zeta(2\lambda)}{(2\pi^2)^\lambda}\bigg]^{|{\mathfrak{u}}|} \right)^{1/(2\lambda)}\left( \sum_{|{\mathfrak{u}}| < \infty} \frac{((|{\mathfrak{u}}|+1)!)^{2\beta} \prod_{j \in {\mathfrak{u}}} b_j^2}{\gamma_{{\mathfrak{u}}}} \right)^{1/2},
    \end{equation*}
    but $C_{\boldsymbol{\gamma}}(\lambda)$ is possibly infinite. Then the choice of weights
\begin{align*}
&\gamma_{\setu}=\bigg(((|\setu|+1)!)^\beta \prod_{j\in\setu}\frac{c_{6}b_j}{\sqrt{2\zeta(2\lambda)/(2\pi^2)^\lambda}}\bigg)^{2/(1+\lambda)},\quad\setu\subseteq\mathbb N,\\
&\lambda=\begin{cases}
\frac{p}{2-p}&\text{if}~p\in(\tfrac23,\tfrac{1}{\beta}),\\
\frac{1}{2-2\alpha}&\text{if}~p\in(0,\min\{\tfrac23,\tfrac{1}{\beta}\}],~p\neq \tfrac{1}{\beta},
\end{cases}
\end{align*}
for arbitrary $\alpha\in (0,1/2)$, minimizes $C_{\boldsymbol{\gamma}}(\lambda)$ and leads to
    $$
        C_{\boldsymbol{\gamma}}(\lambda) < \infty \hspace{1em} \text{ and } \hspace{1em} \sup_{s \geq 1}\| Z_{s,h}'(\bsdelta) \|_{\mathcal{W}_{s,\boldsymbol{\gamma}}} < \infty.
    $$
    However, $C_{\boldsymbol{\gamma}}(\frac{1}{2-2\delta}) \to \infty$ as $\alpha \to 0$, and $C_{\boldsymbol{\gamma}}(\frac{p}{2-p}) \to \infty$ as $p \to (2/3)^+$. In consequence, the error is of order
    $$
        \begin{cases}
            n^{-(1-\alpha)} &\text{ when } p \in (0, 2/3], \\
            n^{-(1/p-1/2)} &\text{ when } p \in (2/3, 1).
        \end{cases}
    $$
\end{theorem}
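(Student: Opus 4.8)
The plan is to view $Z_{s,h}'(\bsdelta)=\int_{U_s}\bsV_s(\bsx,\bsy)\,{\rm e}^{-\frac12\|\bsdelta-\mathcal G(\bsy)\|_{\Gamma^{-1}}^2}\,{\rm d}\bsy$, for a fixed evaluation point $\bsx\in D_{\rm ref}$, as the integral of the single integrand $F(\bsy):=\bsV_s(\bsx,\bsy)\,{\rm e}^{-\frac12\|\bsdelta-\mathcal G(\bsy)\|_{\Gamma^{-1}}^2}\in\mathcal W_{s,\bsgamma}$, and to drive the CBC bound of Theorem~\ref{DjKaOrSc:cbcbound} (with $R=1$) with the parametric derivative estimate of Theorem~\ref{DjKaOrSc:thm:regularity-expected-domain}. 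First I would note that the latter estimate applies to the discretized integrand appearing in $Z_{s,h}'$, since the first-order FE forward solution inherits the Gevrey-type parametric bounds with constants uniform in the mesh size $h$. Evaluating the mixed first derivative at the multi-index $\bsnu=\mathbf 1_{\setu}$ (so that $|\bsnu|=|\setu|$) then gives the pointwise bound $|\partial_\bsy^{\mathbf 1_{\setu}}F|\le c_5\,c_6^{|\setu|}((|\setu|+1)!)^{\beta}\prod_{j\in\setu}b_j$, uniformly in $\bsy$.

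Next I would substitute this into the definition of $\|\cdot\|_{s,\bsgamma}$. Because $U_{|\setu|}$ and $U_{s-|\setu|}$ each carry unit Lebesgue measure, the inner integral over $\bsy_{-\setu}$ and the outer $L^2$ integral over $\bsy_{\setu}$ both preserve the uniform bound, so $\|F\|_{s,\bsgamma}^2\le c_5^2\sum_{\setu\subseteq\{1,\dots,s\}}\gamma_{\setu}^{-1}((|\setu|+1)!)^{2\beta}\prod_{j\in\setu}(c_6 b_j)^2$. Inserting this into Theorem~\ref{DjKaOrSc:cbcbound} produces exactly the claimed bound $c_5\,C_{\bsgamma}(\lambda)\,(n-1)^{-1/(2\lambda)}$, where $C_{\bsgamma}(\lambda)$ is the product of the lattice factor $\big(\sum_{\setu}\gamma_{\setu}^{\lambda}\kappa^{|\setu|}\big)^{1/(2\lambda)}$ and the norm factor just derived, and I abbreviate $\kappa:=2\zeta(2\lambda)/(2\pi^2)^{\lambda}$.

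The core of the argument is minimizing $C_{\bsgamma}(\lambda)$ over the weights. Writing $B_{\setu}^2:=((|\setu|+1)!)^{2\beta}\prod_{j\in\setu}(c_6 b_j)^2$, a standard Lagrange computation (differentiate $\log C_{\bsgamma}(\lambda)$ in $\gamma_{\setv}$ and set it to zero) shows the minimizer obeys $\gamma_{\setv}^{\,1+\lambda}\propto B_{\setv}^2/\kappa^{|\setv|}$, i.e. $\gamma_{\setu}=(B_{\setu}^2/\kappa^{|\setu|})^{1/(1+\lambda)}$, which is precisely the stated POD weight. At this choice both sums defining $C_{\bsgamma}(\lambda)$ collapse to the single quantity $T:=\sum_{|\setu|<\infty}B_{\setu}^{2\lambda/(1+\lambda)}\kappa^{|\setu|/(1+\lambda)}$, whence $C_{\bsgamma}(\lambda)=T^{(1+\lambda)/(2\lambda)}$, and finiteness of $C_{\bsgamma}(\lambda)$ reduces to finiteness of $T$.

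I expect the finiteness and rate analysis of $T$ to be the main obstacle, as this is where assumption (A5) and the Gevrey order $\beta$ interact. Reorganizing by cardinality and bounding the elementary symmetric sums by $\sum_{|\setu|=\ell}\prod_{j\in\setu}(c_6 b_j)^{q'}\le S^{\ell}/\ell!$, with $q':=2\lambda/(1+\lambda)$ and $S:=\sum_{j\ge1}(c_6 b_j)^{q'}$, gives $T\le\sum_{\ell\ge0}(\ell+1)^{\beta q'}(\ell!)^{\beta q'-1}(\kappa^{1/(1+\lambda)}S)^{\ell}$. Convergence requires $S<\infty$, which by (A5) ($\bsb\in\ell^p$) means $q'\ge p$, i.e. $\lambda\ge p/(2-p)$; and it requires the factorial factor to be summable, which by the ratio test needs $\beta q'<1$. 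To optimize the rate $n^{-1/(2\lambda)}$ one takes $\lambda$ minimal subject to these and to $\lambda\in(1/2,1]$: when $p>2/3$ the binding constraint is $\lambda=p/(2-p)$, for which $q'=p$ and $\beta q'<1\Leftrightarrow p<1/\beta$, yielding the rate $n^{-(1/p-1/2)}$; when $p\le 2/3$ the constraint $\lambda>1/2$ binds, so one sets $\lambda=\tfrac{1}{2-2\alpha}$ with $\alpha\in(0,1/2)$ and obtains $n^{-(1-\alpha)}$. The divergence of $C_{\bsgamma}(\lambda)$ as $\alpha\to0$ or $p\to(2/3)^+$ then reflects precisely the blow-up of $T$ as one of these constraints is saturated.
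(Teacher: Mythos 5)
Your overall route---bounding the mixed first derivatives of the integrand via Theorem~\ref{DjKaOrSc:thm:regularity-expected-domain}, inserting them into the CBC bound of Theorem~\ref{DjKaOrSc:cbcbound}, minimizing $C_{\bsgamma}(\lambda)$ over the weights to obtain the stated POD weights, and then analyzing the collapsed sum $T$---is exactly the argument the paper invokes (its proof is a one-line citation of the KSS12 template, Theorem 6.4 there), and your weight derivation together with the identity $C_{\bsgamma}(\lambda)=T^{(1+\lambda)/(2\lambda)}$ is correct. However, there is a genuine gap in your finiteness analysis of $T$. Your estimate $\sum_{|\setu|=\ell}\prod_{j\in\setu}(c_6b_j)^{q'}\le S^{\ell}/\ell!$ gives $T\le\sum_{\ell\ge0}((\ell+1)!)^{\beta q'}\big(\kappa^{1/(1+\lambda)}S\big)^{\ell}/\ell!$, which by the ratio test converges only if $\beta q'<1$, i.e.\ $\lambda<1/(2\beta-1)$. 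In the case $p\le 2/3$ you take $\lambda=1/(2-2\alpha)>1/2$, so $q'=2/(3-2\alpha)>2/3$, and your condition $\beta q'<1$ forces $\beta<3/2-\alpha$. The theorem, however, asserts finiteness for every Gevrey order $\beta\ge1$ provided only $p<1/\beta$---in particular for $\beta=2$ (the value used in the paper's own numerical experiments) with $p<1/2$. In that regime the bound you derive for $T$ diverges, so your argument does not establish the claim; moreover, the constraint you obtain ($\beta q'<1$) is not the one appearing in the theorem, and your analysis never explains why the case $p=1/\beta$ must be excluded.

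The missing idea is to exploit the $\ell^p$ summability of $\bsb$ \emph{inside} the elementary symmetric sums, not merely through $S<\infty$. Writing $e_\ell(\bsw):=\sum_{|\setu|=\ell}\prod_{j\in\setu}w_j$ with $w_j:=(c_6b_j)^{q'}\kappa^{1/(1+\lambda)}$ (your $\kappa$ and $q'=2\lambda/(1+\lambda)$), one has for any $0<r\le1$ the sharper bound $e_\ell(\bsw)\le\big(e_\ell(\bsw^r)\big)^{1/r}\le\|\bsw\|_{\ell^r}^{\ell}\,(\ell!)^{-1/r}$, using $\sum_{\setu}a_{\setu}^{1/r}\le\big(\sum_{\setu}a_{\setu}\big)^{1/r}$ and the multinomial theorem. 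Choosing $r=p/q'$---legitimate since $q'\ge p$ in both of your cases, and then $\bsw\in\ell^r$ by (A5)---turns the ratio-test condition into $\beta q'<1/r=q'/p$, i.e.\ $p\beta<1$, with the borderline $p=1/\beta$ requiring additionally $\|\bsw\|_{\ell^r}<1$; this is precisely the theorem's hypothesis $p<1/\beta$ and the reason for excluding $p=1/\beta$. (When $p\in(2/3,1/\beta)$ and $q'=p$ one has $r=1$, so your crude bound happens to suffice there, which is why only your second case breaks.) With this replacement, your case distinction, the resulting rates $n^{-(1/p-1/2)}$ and $n^{-(1-\alpha)}$, and the blow-up statements go through verbatim. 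Your remaining loose end---that the first-order FE forward map inherits the parametric Gevrey bounds uniformly in $h$---is a standard step that the paper glosses over as well.
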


\begin{proof}
    The proof is carried out in a similar way to \cite[Theorem 6.4]{DjKaOrSc:kss12}.\qed
\end{proof}

\subsubsection{Total Error}
Since the bounds on the mixed derivatives of $Z_{s,h}'(\bsdelta$), i.e. the numerator of $\mathbb{E}_{s,h}^{\bsdelta}[\bsV_s](\bsx)$, dominate those of the denominator $Z_{s,h}(\bsdelta)$, this choice of weights guarantees the dimension-independent error bound in both cases. By usual arguments (see for example \cite{DjKaOrSc:BaKaLa24}), this in turn means that the estimation of the ratio satisfies the bound
$$
\sqrt{\mathbb{E}_{\boldsymbol{\Delta}}\left[\big\| \mathbb{E}_{s,h}^{\boldsymbol{\delta}}[\boldsymbol{V}_s] -\mathbb{E}_{s,h,n}^{\boldsymbol{\delta}}[\boldsymbol{V}_s] \big\|_{L^2(D_{\rm ref})}^2\right]} \lesssim n^{\max\{ -1/p + 1/2, -1+\alpha\}}.
$$

\begin{theorem}
    Let assumptions {\rm (A1)--(A7)} hold. With the choice of weights in Theorem \ref{DjKaOrSc:thm: weights}, the total error satisfies the following bound:
    $$
    \sqrt{\mathbb{E}_{\boldsymbol{\Delta}}\left[\big\| \mathbb{E}^{\boldsymbol{\delta}}[\boldsymbol{V}] -\mathbb{E}_{s,h,n}^{\boldsymbol{\delta}}[\boldsymbol{V}_s] \big\|_{L^2(D_{\rm ref})}^2\right]} \lesssim s^{-2/p+1} + h^2 + n^{\max\{ -1/p + 1/2, -1+\alpha\}}.
    $$
\end{theorem}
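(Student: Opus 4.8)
The plan is to combine the three ingredients assembled in this section---the dimension truncation bound of Theorem~\ref{DjKaOrSc:thm: dimension-truncation}, the finite element bound of Theorem~\ref{DjKaOrSc:thm:FEM-error}, and the QMC bound for the posterior ratio recorded in the display immediately preceding the statement---by means of the triangle inequality introduced at the start of Section~\ref{DjKaOrSc:sec:parametricanalysis}. Writing the three differences as $T_1 := \mathbb{E}^{\bsdelta}[\bsV] - \mathbb{E}_{s}^{\bsdelta}[\bsV_s]$, $T_2 := \mathbb{E}_{s}^{\bsdelta}[\bsV_s] - \mathbb{E}_{s,h}^{\bsdelta}[\bsV_s]$, and $T_3 := \mathbb{E}_{s,h}^{\bsdelta}[\bsV_s] - \mathbb{E}_{s,h,n}^{\bsdelta}[\bsV_s]$, I note that only $T_3$ depends on the random shifts $\boldsymbol\Delta$, while $T_1$ and $T_2$ are deterministic. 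Minkowski's inequality for the mixed $L^2(\boldsymbol\Delta;L^2(D_{\rm ref}))$ norm therefore gives
\begin{equation*}
\sqrt{\mathbb{E}_{\boldsymbol\Delta}\big[\|T_1+T_2+T_3\|_{L^2(D_{\rm ref})}^2\big]} \leq \|T_1\|_{L^2(D_{\rm ref})} + \|T_2\|_{L^2(D_{\rm ref})} + \sqrt{\mathbb{E}_{\boldsymbol\Delta}\big[\|T_3\|_{L^2(D_{\rm ref})}^2\big]},
\end{equation*}
so the three error sources decouple and may be estimated separately.

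The term $T_3$ is precisely the quantity already bounded by $n^{\max\{-1/p+1/2,\,-1+\alpha\}}$, a consequence of Theorem~\ref{DjKaOrSc:thm: weights} together with the stability of the posterior ratio. For $T_1$ I would invoke Theorem~\ref{DjKaOrSc:thm: dimension-truncation}. Since $\mathbb{E}^{\bsdelta}[\bsV]=Z'/Z$ and $\mathbb{E}_{s}^{\bsdelta}[\bsV_s]=Z_s'/Z_s$, the algebraic identity
\begin{equation*}
\frac{Z'}{Z} - \frac{Z_s'}{Z_s} = \frac{Z' - Z_s'}{Z_s} + \frac{Z'\,(Z_s - Z)}{Z\,Z_s}
\end{equation*}
reduces the task to controlling $|Z - Z_s|$ and $\|Z' - Z_s'\|_{L^2(D_{\rm ref})}$, using that $Z$ and $Z_s$ are bounded below by $\exp(-\tfrac12\sup_{\bsy}\|\boldsymbol\delta-\mathcal G(\bsy)\|_{\Gamma^{-1}}^2)>0$ (the range of $\mathcal G$ being bounded by assumption~(A1)) and that $\|Z'\|_{L^2(D_{\rm ref})}$ is finite. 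Both differences reduce, through the Lipschitz continuity of $\boldsymbol\eta\mapsto\exp(-\tfrac12\|\boldsymbol\delta-\boldsymbol\eta\|_{\Gamma^{-1}}^2)$ on the bounded range of $\mathcal G$ and the uniform bound on $\bsV$, to the forward truncation error of $\widehat u$, to which Theorem~\ref{DjKaOrSc:thm: dimension-truncation} applies and yields the rate $s^{-2/p+1}$. The term $T_2$ is treated by the identical quotient decomposition, with the finite element perturbation $\mathcal O(u_{s,h}(\cdot,\bsy),\bsy)$ of the observations in place of the truncation perturbation and the $h^2$ estimate of Theorem~\ref{DjKaOrSc:thm:FEM-error} in place of the truncation estimate.

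The main obstacle is the propagation of the forward errors into the likelihood through the observation operator. Because $\mathcal G(\bsy)=\mathcal O(u_s(\cdot,\bsy),\bsy)$ consists of \emph{pointwise} evaluations of the solution at the nodes $\boldsymbol V_s(\bsx_i,\bsy)$, the perturbation $|\mathcal G(\bsy)-\mathcal O(u_{s,h}(\cdot,\bsy),\bsy)|$ cannot be read off from the $L^2(D(\bsy))$ estimate of Theorem~\ref{DjKaOrSc:thm:FEM-error} alone, nor is pointwise evaluation a bounded functional on $H_0^1$, which is the space in which Theorem~\ref{DjKaOrSc:thm: dimension-truncation} furnishes the rate. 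This is exactly where assumptions~(A6)--(A7) are indispensable: they ensure $u_s(\cdot,\bsy)\in H^2(D(\bsy))$, and since $d<4$ the embedding $H^2(D(\bsy))\hookrightarrow\mathcal C^0(\overline{D(\bsy)})$ makes pointwise evaluation a bounded functional and, via standard $L^\infty$ finite element estimates for $H^2$-regular solutions, upgrades the discretization error at the observation points to order $h^2$ uniformly in $\bsy\in U_s$ by virtue of the uniform bounds in~(A1). The same embedding legitimizes applying the functional form of Theorem~\ref{DjKaOrSc:thm: dimension-truncation} to the pointwise observations for the truncation term. Once this uniform pointwise control is established, the deterministic terms inherit the clean rates $\|T_1\|_{L^2(D_{\rm ref})}\lesssim s^{-2/p+1}$ and $\|T_2\|_{L^2(D_{\rm ref})}\lesssim h^2$, and summing the three bounds yields the asserted estimate.
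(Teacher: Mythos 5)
Your proposal is correct and follows essentially the same route as the paper: the paper's own proof is a one-line appeal to Theorems~\ref{DjKaOrSc:thm: dimension-truncation}--\ref{DjKaOrSc:thm: weights}, combined implicitly through the error decomposition stated at the beginning of Section~\ref{DjKaOrSc:sec:parametricanalysis}, which is exactly your $T_1+T_2+T_3$ splitting. What you add---the Minkowski step separating the deterministic terms from the shift-dependent one, the quotient decomposition with lower bounds on $Z$ and $Z_s$, and the role of (A6)--(A7) in making pointwise observations well defined---are precisely the details the paper leaves implicit (via ``usual arguments'' and its remark following Theorem~\ref{DjKaOrSc:thm: dimension-truncation}), so your write-up is a fleshed-out version of the same argument rather than a different one.
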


\begin{proof}
    The assertion follows from Theorems \ref{DjKaOrSc:thm: dimension-truncation}--\ref{DjKaOrSc:thm: weights}.\qed
\end{proof}

\section{Numerical Experiments}\label{DjKaOrSc:sec:numex}

In this section we present numerical experiments to verify the theoretical convergence rates in the setting presented above. We consider three methods: Monte Carlo (MC), QMC based on a generating vector $\bsz$ constructed with the weights derived in Theorem~\ref{DjKaOrSc:thm: weights}, leaving out the constant $c_6$ for numerical stability, and QMC with an off-the-shelf generating vector \cite[\texttt{lattice-32001-1024-1048576.3600}]{DjKaOrSc:Kuo24}. The software used for the numerical experiments is available at 
\begin{center}
\url{https://github.com/Morteu/QMC-for-Gevrey-Domain-UQ}
\end{center}

We consider examples where $D_{\rm ref} := \{ (x_1,x_2) \in \mathbb{R}^2 : x_1^2 + x_2^2 \leq 1 \}$ is the unit disk, and solve the variational problem \eqref{DjKaOrSc:eq:nonpullback} for $f(x_1, x_2) = 10\sin(x_1 x_2) - 5\cos(x_1+x_2)^2$ and a Gevrey regular (with Gevrey parameter $\beta = 2$) perturbation field $\bsV$ of the form $\boldsymbol{V}(\boldsymbol{x}, \boldsymbol{y}) = a(\boldsymbol{x}, \boldsymbol{y}) \boldsymbol{x}$, where
\begin{equation*}
a((x_1,x_2);\boldsymbol{y}) = 1 + \frac{6}{5} \sum_{j\geq 1} \frac{\cos\left(3 j \arctan2(x_1,x_2) - \pi/2 \right)}{j^{2.1}} \exp\left(-\frac{1}{\frac12 + y_j}\right).
\end{equation*}

As mentioned above, we set for our observation operator, 
$$
\mathcal{O}: H_0^1(\mathscr D) \times U_s \to \mathbb{R}^k, \quad (u,\bsy) \mapsto (u(\bsx_0, \bsy), \dots, u(\bsx_{k-1}, \bsy)),
$$
where we assume that $k$ fixed points are given in $D_{\rm ref}$ (see Figure \ref{DjKaOrSc:fig:point-evaluations} for $k = 5$).

\begin{figure}[!b]
    \centering
        \includegraphics[width=\textwidth]{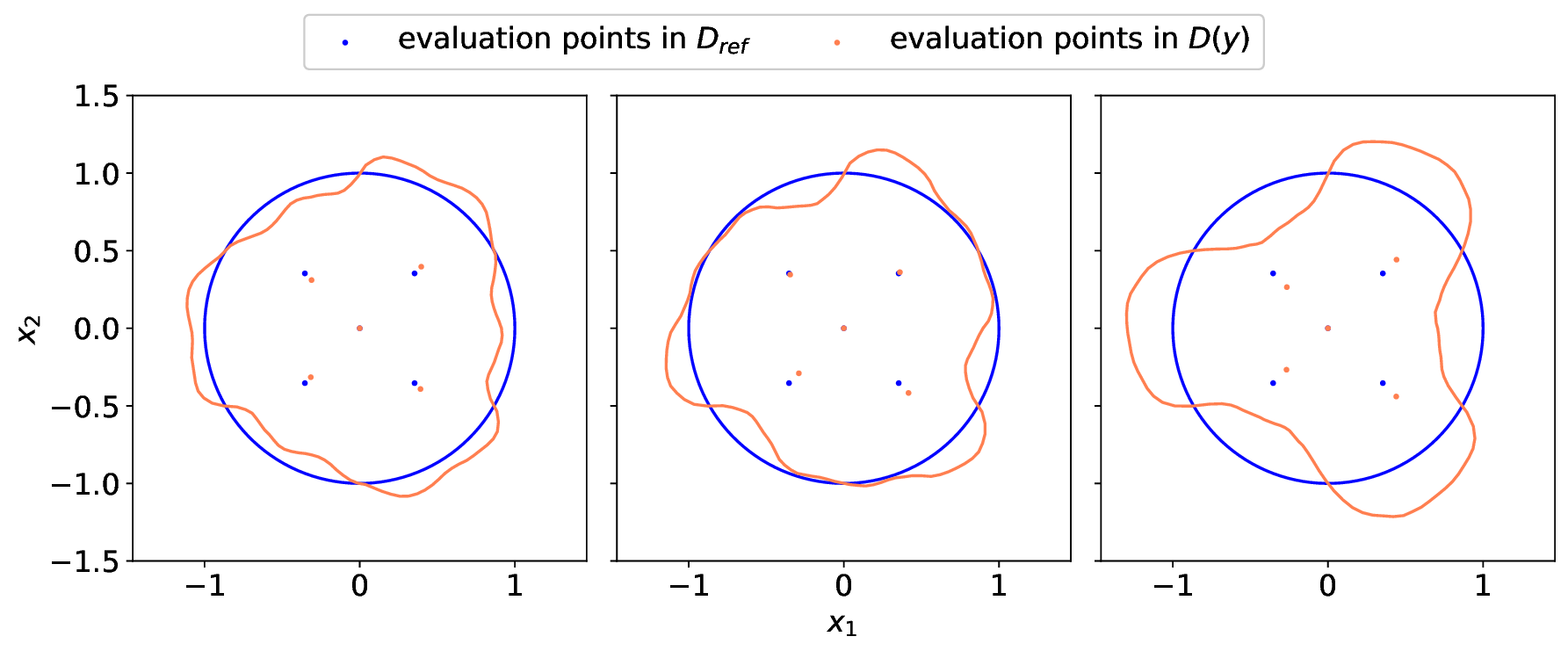}
    \caption{Fixed and transported evaluation points for three realizations, $k=5$.}
    \label{DjKaOrSc:fig:point-evaluations}
\end{figure}

The data $\bsdelta$ given by \eqref{DjKaOrSc:delta} is generated after sampling from a truncated $\bsV$ with a stochastic dimension $s_* = 200$ and discretizing the problem with a mesh size $h_* = 2^{-6}$. We then obtain $(\delta_0, \dots, \delta_{k-1}) = (u_{s_*,h_*}(\bsV_{s_*}(\bsx_0, \bsy)), \dots, u_{s_*,h_*}(\bsV_{s_*}(\bsx_{k-1}, \bsy)))$. Finally, we set the noise level $\eta$ to $10\%$ of the maximum absolute value in $\bsdelta$.

To reconstruct the uncertain domain, we set a stochastic dimension $s = 100$ and discretize the PDE solution with a mesh size $h = 2^{-5}$. To test the convergence of the rms error as predicted in Theorem \ref{DjKaOrSc:thm: weights}, we estimate the rms error using $R=8$ random shifts for prime values of $n$ ranging from $67$ to $128021$. 

\begin{figure}[!t]
    \centering
    \includegraphics[height=0.455\textwidth]{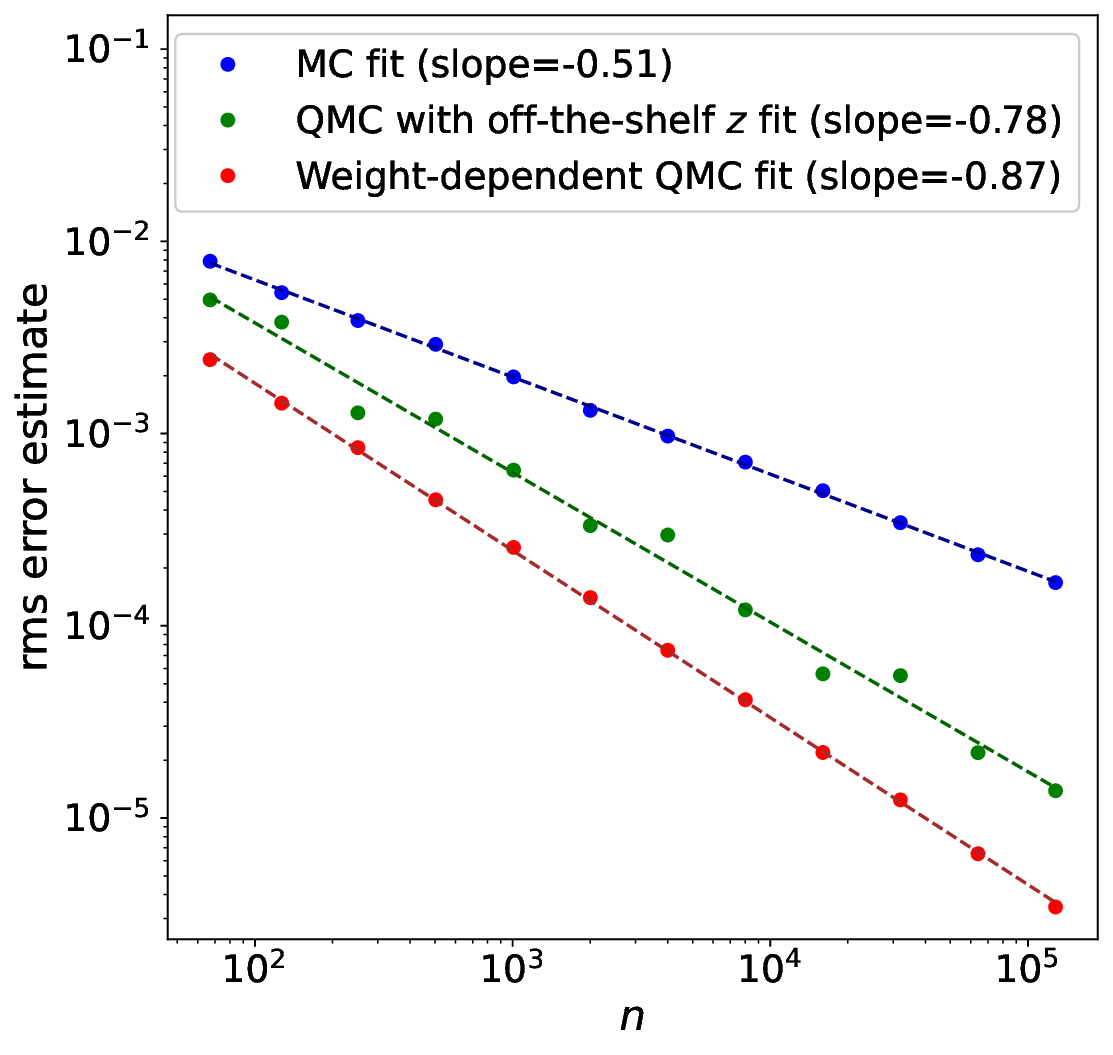}\includegraphics[height=0.46\textwidth]{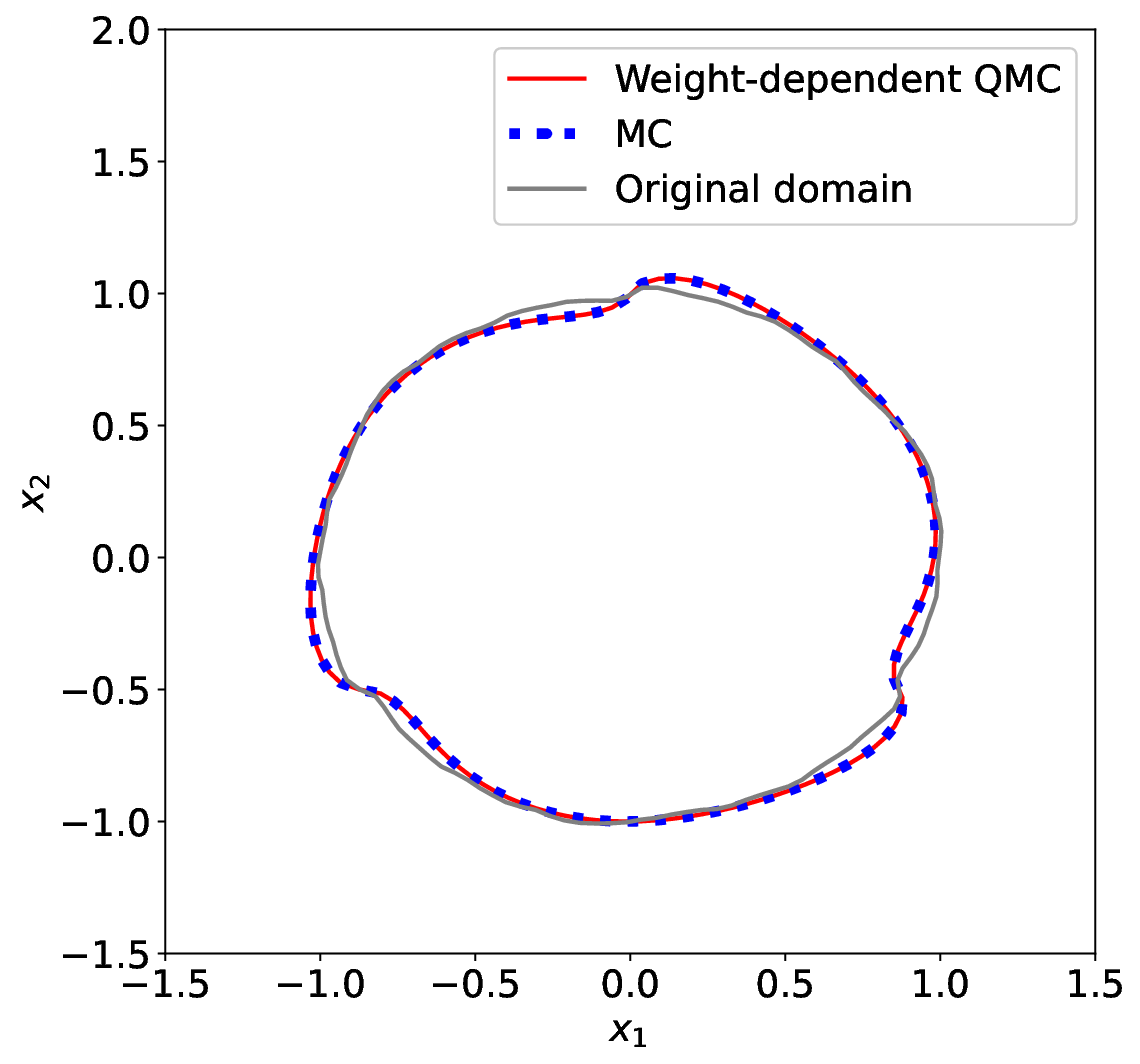}
    \caption{Results corresponding to the Bayesian inverse problem ($\eta = 10\% \| \bsdelta \|_{\infty}$, $k=5$). Left: computed rms errors for the ratio estimator with increasing $n$ for the three methods. Right: the reconstructed domains for MC and QMC with $n=128021$ vis-\`a-vis the ground truth.}
    \label{DjKaOrSc:fig:rms convergence}
\end{figure}

The results on the left-hand side of Figure \ref{DjKaOrSc:fig:rms convergence} show the rms error for the approximation $\mathbb{E}_{s,h,n}^{\bsdelta}[\bsV_s]$ with $k = 5$ observation points. We observe a roughly linear convergence rate for the QMC approximation with the weights derived in Theorem~\ref{DjKaOrSc:thm: weights}, and hence an almost doubled rate with respect to the MC approach. The QMC approximation with an off-the-shelf generating vector $\bsz$ also performs consistently better than MC, but it exhibits a higher variance.

On the right-hand side of Figure~\ref{DjKaOrSc:fig:rms convergence}, we present reconstructions for the weight-tailored QMC and the MC approximations. Due to the scaling of the parameters, both methods converge to the shown reconstructions already for small values of $n$. Nevertheless, the results indicate the consistency of the QMC approximation.

\section{Conclusions}\label{DjKaOrSc:sec:conclusions}

In this work, we have studied the application of randomly shifted rank-1 lattice rules to Bayesian shape inversion subject to Gevrey regular deformations of a reference domain with the Poisson equation as the forward model. Modeling the uncertain geometry using Gevrey regular perturbation fields covers a wider range of potential parameterizations than those covered by the affine or holomorphic frameworks, while also retaining a nearly optimal QMC convergence rate in the inverse setting as showcased by Theorem \ref{DjKaOrSc:thm: weights} and our numerical experiments. In addition, we showed that this setting leads to the optimal dimension truncation error rate as well as the standard FEM error rate.

The regularity analysis in this paper was presented for the pullback PDE solution. However, in many practical applications, it would be more natural to consider the pushforward PDE solution on the actual realization of the random or uncertain geometry instead. Our choice of the observation operator allows us to perform the experiments on the deformed domains while retaining the parametric regularity of the pullback PDE solution. Extending the regularity analysis for more general observation operators as well as more involved forward models is left for future work.

\section*{Acknowledgements}
Ana Djurdjevac, Max Orteu, and Claudia Schillings acknowledge funding by the Deutsche Forschungsgemeinschaft (DFG, German Research Foundation) CRC/TRR 388 ``Rough Analysis, Stochastic Dynamics and Related Fields'' – Project ID 516748464. The work of Vesa Kaarnioja was supported by the Research Council of Finland (Flagship of Advanced Mathematics for Sensing, Imaging and Modelling grant 359183). The authors would also like to thank the HPC Service of FUB-IT, Freie Universit\"at Berlin, for computing time.


\begin{thebibliography}{99}
\bibitem{DjKaOrSc:BaKaLa24} Bazahica, L., Kaarnioja, V., Roininen, L.: Uncertainty quantification for electrical impedance tomography using quasi-Monte Carlo methods. \emph{Inverse Problems} \textbf{41}, 065002 (2025)
\bibitem{DjKaOrSc:canote16} Castrill\'on-Cand\'as, J.~E., Nobile, F., Tempone, R.~F.: Analytic regularity and collocation approximation for elliptic PDEs with random domain deformations. \emph{Comput. Math. Appl.} \textbf{71}(6), 1173--1197 (2016)
\bibitem{DjKaOrSc:chernov1} Chernov, A., L{\^{e}}, T.: Analytic and Gevrey class regularity for parametric elliptic eigenvalue problems and applications. {\em SIAM J. Numer. Anal.} \textbf{62}(4), 1874--1900 (2024)
\bibitem{DjKaOrSc:chernov2} Chernov, A., L{\^{e}}, T.: Analytic and Gevrey class regularity for parametric semilinear reaction-diffusion problems and applications in uncertainty quantification. {\em Comput. Math. Appl.} \textbf{164}, 116--130 (2024)
\bibitem{DjKaOrSc:ChDjEl20} Church, L., Djurdjevac, A., Elliott, C.~M.: A domain mapping approach for elliptic equations posed on random bulk and surface domains. \emph{Numer. Math.} \textbf{146}, 1--49 (2020)
\bibitem{DjKaOrSc:cbc1} Cools, R., Kuo, F.~Y., Nuyens, D.: Constructing embedded lattice rules for multivariate integration. {\em SIAM J. Sci. Comput.}~\textbf{28}, 2162--2188 (2006)
\bibitem{DjKaOrSc:actanumer} Dick, J., Kuo, F.~Y., Sloan, I.~H.: High-dimensional integration: the quasi-Monte Carlo way. {\em Acta Numer.}~\textbf{22}, 133--288 (2013)
\bibitem{DjKaOrSc:djurdjevac24} Djurdjevac, A., Kaarnioja, V., Schillings, C., Zepernick, A.: Uncertainty quantification for stationary and time-dependent PDEs subject to Gevrey regular random domain deformations. Preprint, arXiv:2502.12345 [math.NA] (2025)
\bibitem{DjKaOrSc:dolz} D\"olz, J., Harbrecht, H., Jerez-Hanckes, C., Multerer, M.: Isogeometric multilevel quadrature for forward and inverse acoustic scattering. \emph{Comput. Methods Appl. Mech. Eng.} \textbf{388}, 114242 (2022) 
\bibitem{DjKaOrSc:gantnerpeters18} Gantner, R.~N., Peters, M.~D.: Higher-order quasi-Monte Carlo for Bayesian shape inversion. {\em SIAM/ASA J. Uncertain. Quantif.} \textbf{6}(2), 707--736 (2018)
\bibitem{DjKaOrSc:GiTr2001} Gilbarg, D., Trudinger, N.~S.: {\em Elliptic Partial Differential Equations of Second Order}. Springer Berlin, Heidelberg (2001)
\bibitem{DjKaOrSc:Grisvard} Grisvard, P.: {\em Elliptic Problems in Nonsmooth Domains}. Society for Industrial and Applied Mathematics (2011)
\bibitem{DjKaOrSc:gk22} Guth, P.~A., Kaarnioja, V.: Generalized dimension truncation error analysis for high-dimensional numerical integration: lognormal setting and beyond. \emph{SIAM J. Numer. Anal.} \textbf{62}(2), 872--892 (2024)
\bibitem{DjKaOrSc:gk24gevrey} Guth, P.~A., Kaarnioja, V.: Quasi-Monte Carlo for partial differential equations with generalized Gaussian input uncertainty. Accepted for publication in \emph{SIAM J. Numer. Anal.}~(2025)
\bibitem{DjKaOrSc:harbrecht16} Harbrecht, H., Peters, M., Siebenmorgen, M.: Analysis of the domain mapping method for elliptic diffusion problems on random domains. {\em Numer. Math.} \textbf{134}, 823--856 (2016)
\bibitem{DjKaOrSc:harbrecht24} Harbrecht, H., Schmidlin, M., Schwab, Ch.: The Gevrey class implicit mapping theorem with application to UQ of semilinear elliptic PDEs. {\em Math. Models Methods Appl. Sci.} \textbf{34}(5), 881--917 (2024)
\bibitem{DjKaOrSc:hiptmair18} Hiptmair, R., Scarabosio, L., Schillings, C., Schwab, Ch.: Large deformation shape uncertainty quantification in acoustic scattering. \emph{Adv. Comput. Math.} \textbf{44}, 1475--1518 (2018)
\bibitem{DjKaOrSc:hyvonen17} Hyv\"onen, N., Kaarnioja, V., Mustonen, L., Staboulis, S.: Polynomial collocation for handling an inaccurately known measurement configuration in electrical impedance tomography. \emph{SIAM J. Appl. Math.} \textbf{77}, 202--223 (2017)
\bibitem{DjKaOrSc:ks24} Kaarnioja, V., Schillings, C.: Quasi-Monte Carlo for Bayesian design of experiment problems governed by parametric PDEs. Preprint, arXiv:2405.03529 [math.NA] (2024)
\bibitem{DjKaOrSc:kac} Kac, M.: Can one hear the shape of a drum? {\em Amer. Math. Monthly} \textbf{73}(4, part 2), 1--23 (1966)
\bibitem{DjKaOrSc:kaipiosomersalo} Kaipio, J., Somersalo, E.: {\em Statistical and Computational Inverse Problems}. Springer, New York (2004)
\bibitem{DjKaOrSc:scarabosio24} Kuijpers, S., Scarabosio, L.: Wavenumber-explicit well-posedness of Bayesian shape inversion in acoustic scattering. Preprint, arXiv:2410.23100 [math.AP] (2024)
\bibitem{DjKaOrSc:Kuo24} Kuo, F. Y.: Lattice rule generating vectors. \url{https://web.maths.unsw.edu.au/~fkuo/lattice/}
\bibitem{DjKaOrSc:kuonuyenssurvey} Kuo, F.~Y., Nuyens, D.: Application of quasi-Monte Carlo methods to elliptic PDEs with random diffusion coefficients: a survey of analysis and implementation. \emph{Found. Comput. Math.} \textbf{16}(6), 1631--1696 (2016)
\bibitem{DjKaOrSc:kuoinfinite17} Kuo, F.~Y., Nuyens, D., Plaskota, L., Sloan, I.~H., Wasilkowski, G.~W.: Infinite-dimensional integration and the multivariate decomposition method. \emph{J. Comput. Appl. Math.}~\textbf{326}, 217--234 (2017)
\bibitem{DjKaOrSc:kss12} Kuo, F.~Y., Schwab, Ch., Sloan, I.~H.: Quasi-Monte Carlo finite element methods for a class of elliptic partial differential equations with random coefficients. {\em SIAM J. Numer. Anal.} {\bf 50}(6), 3351--3374 (2012)
\bibitem{DjKaOrSc:cbc2} Nuyens, D., Cools, R.: Fast algorithms for component-by-component construction of rank-1 lattice rules in shift-invariant reproducing kernel Hilbert spaces. {\em Math. Comp.} \textbf{75}, 903--920 (2006)
\bibitem{DjKaOrSc:Qu2014} Quarteroni, A.: {\em Numerical Models for Differential Problems}. Springer Milano (2014)
\bibitem{DjKaOrSc:darcy}Scheichl, R., Stuart, A.~M., Teckentrup, A.~L.: Quasi-Monte Carlo and multilevel Monte Carlo methods for computing posterior expectations in elliptic inverse problems. {\em SIAM/ASA J. Uncertain. Quantif.} \textbf{5}(1), 493--518 (2017)
\bibitem{DjKaOrSc:somersalo} Somersalo, E., Cheney, M., Isaacson, D.: Existence and uniqueness for electrode models for electric current computed tomography. \emph{SIAM J. Appl. Math.} \textbf{52}(4), 1023--1040 (1992)
\bibitem{DjKaOrSc:stuart} Stuart, A.~M.: Inverse problems: a Bayesian perspective. {\em Acta Numer.} \textbf{19}, 451--559 (2010)
\bibitem{DjKaOrSc:xiu06} Xiu, D., Tartakovsky, D.~M.: Numerical methods for differential equations in random domains. \emph{SIAM J. Sci. Comput.} \textbf{28}(3), 1167--1185 (2006)
\end{thebibliography}
\end{document}